\newtheorem{theorem}{Theorem}
\newtheorem{lemma}{Lemma}
\newtheorem{algorithm}{Algorithm}
\newtheorem{remark}{Remark}
\newtheorem{proposition}{Proposition}
\newtheorem{definition}{Definition}
\newtheorem{corollary}{Corollary}
\newcommand{\rd}{\mathrm{d}}
\newcommand{\rtr}{\mathrm{tr}}
\newcommand{\bsa}{\boldsymbol{a}}
\newcommand{\bsc}{\boldsymbol{c}}
\newcommand{\bsk}{\boldsymbol{k}}
\newcommand{\bsq}{\boldsymbol{q}}
\newcommand{\bsu}{\boldsymbol{u}}
\newcommand{\bsx}{\boldsymbol{x}}
\newcommand{\bsy}{\boldsymbol{y}}
\newcommand{\bsrho}{\boldsymbol{\rho}}
\newcommand{\bszero}{\boldsymbol{0}}
\newcommand{\NN}{\mathbb{N}}
\newcommand{\RR}{\mathbb{R}}
\newcommand{\ZZ}{\mathbb{Z}}
\newcommand{\Dcal}{\mathcal{D}}
\newcommand{\Ecal}{\mathcal{E}}
\newcommand{\Fcal}{\mathcal{F}}
\newcommand{\wal}{\mathrm{wal}}
\newcommand{\wor}{\mathrm{wor}}
\newcommand{\vol}{\mathrm{vol}}
\begin{document}

\title{Construction of interlaced polynomial lattice rules for infinitely differentiable functions}

\author{Josef Dick\thanks{School of Mathematics and Statistics, The University of New South Wales, Sydney, NSW 2052, Australia (\tt{josef.dick@unsw.edu.au})}, Takashi Goda\thanks{Graduate School of Engineering, The University of Tokyo, 7-3-1 Hongo, Bunkyo-ku, Tokyo 113-8656, Japan (\tt{goda@frcer.t.u-tokyo.ac.jp})}, Kosuke Suzuki\thanks{School of Mathematics and Statistics, The University of New South Wales, Sydney, NSW 2052, Australia (\tt{kosuke.suzuki1@unsw.edu.au})}, Takehito Yoshiki\thanks{School of Mathematics and Statistics, The University of New South Wales, Sydney, NSW 2052, Australia (\tt{takehito.yoshiki1@unsw.edu.au})}}

\date{\today}

\maketitle

\begin{abstract}
We study multivariate integration over the $s$-dimensional unit cube in a weighted space of infinitely differentiable functions. It is known from a recent result by Suzuki that there exists a good quasi-Monte Carlo (QMC) rule which achieves a super-polynomial convergence of the worst-case error in this function space, and moreover, that this convergence behavior is independent of the dimension under a certain condition on the weights.

In this paper we provide a constructive approach to finding a good QMC rule achieving such a dimension-independent super-polynomial convergence of the worst-case error. Specifically, we prove that interlaced polynomial lattice rules, with an interlacing factor chosen properly depending on the number of points $N$ and the weights, can be constructed using a fast component-by-component algorithm in at most $O(sN(\log N)^2)$ arithmetic operations to achieve a dimension-independent super-polynomial convergence. The key idea for the proof of the worst-case error bound is to use a variant of Jensen's inequality with a purposely-designed concave function.
\end{abstract}
\emph{Keywords}:\; Quasi-Monte Carlo integration, super-polynomial convergence, interlaced polynomial lattice rules, infinitely differentiable functions\\
\emph{MSC classifications}:\; 65C05, 65D30, 65D32

\section{Introduction}\label{sec:intro}
We study the approximation of multivariate integrals of real-valued functions defined over the $s$-dimensional unit cube $[0,1]^s$,
\begin{align*}
 I(f) = \int_{[0,1]^s}f(\bsx) \rd \bsx .
\end{align*}
Quasi-Monte Carlo (QMC) integration approximates $I(f)$ by using a deterministically chosen finite point set $P\subset [0,1]^s$ as
\begin{align*}
 I(f;P) = \frac{1}{|P|}\sum_{\bsx\in P}f(\bsx) ,
\end{align*}
where $|P|$ denotes the cardinality of $P$. Note that we interpret $P$ here as a set in which the multiplicity of elements matters. In order to make the integration error $|I(f;P)-I(f)|$ small for a class of functions $f$, $P$ needs to be carefully designed depending on the class to which the function $f$ belongs. Digital nets and sequences are a well-known choice for constructing good quadrature points for several classes of functions \cite{DPbook,Niedbook}.

A classical criterion for measuring the distribution properties of point sets is the so-called star-discrepancy. The Koksma--Hlawka inequality bounds the integration error using a point set by the star-discrepancy of this point set times the total variation in the sense of Hardy and Krause, see for instance \cite[Chapter~2, Section~5]{KNbook}. Thus a low-discrepancy point set of $N$ points yields a small integration error bound, typically of order $N^{-1+\varepsilon}$ with arbitrarily small $\varepsilon>0$, assuming that the function $f$ has bounded total variation in the sense of Hardy and Krause. Regarding explicit constructions of low-discrepancy digital nets and sequences, we refer to \cite[Chapter~8]{DPbook} and \cite[Chapter~4]{Niedbook}. Polynomial lattice point sets, first introduced in \cite{Nied92}, are a special construction method for digital nets and have been extensively studied in the literature, see for instance \cite[Chapter~10]{DPbook} and \cite{Pill12}. Polynomial lattice rules are QMC rules using a polynomial lattice point set as quadrature points. While we usually resort to some computer search algorithm to find good polynomial lattice rules for $s>2$, the major advantage of polynomial lattice rules lies in their flexibility, that is, we can design a suitable QMC rule for the problem at hand.

In order to achieve a faster convergence of the integration error, explicit constructions of point sets, referred to as \emph{higher order digital nets}, have been established by Dick \cite{Dick07,Dick08} which can fully exploit the smoothness of an integrand. Specifically QMC rules using higher order digital nets achieve the optimal convergence rate of the integration error of order $N^{-\alpha+\varepsilon}$ with arbitrarily small $\varepsilon>0$, when the function $f$ has square integrable partial mixed derivatives up to order $\alpha \ge 2$ in each variable. We remark that recent applications in the area of uncertainty quantification, in particular partial differential equations with random coefficients, are in need of using these types of quadrature rules, see for instance \cite{DKLNS14}. The above result by Dick is based chiefly on analyzing the decay of the Walsh coefficients of smooth functions \cite{Dick08,Dick09}.

Numerical integration of infinitely many times differentiable functions in certain function spaces has recently  been considered in \cite{DLPW11, DG15, IKLP15, KPW14}. However, the results on higher order digital nets in \cite{Dick07, Dick08} do not improve if one assumes that the integrand is infinitely many times differentiable. More precisely, if one sets $\alpha = \infty$ in \cite{Dick07, Dick08} one obtains constants which are infinite and the error bounds become trivial. To improve the error bounds in these papers for function spaces consisting of infinitely many times differentiable functions using higher order digital nets requires new bounds on the Walsh coefficients. Such an analysis of the Walsh coefficients was recently done in \cite{SY15,Yoshiki15}, where they obtained a space $\Fcal_s$ of infinitely differentiable functions whose Walsh coefficients decay with a certain order. The worst-case error in $\Fcal_s$ by a digital net is closely related to the Walsh figure of merit (WAFOM) introduced in \cite{MSM14,Suzuki15}, which is one of the computable quality criteria of digital nets, although WAFOM was originally derived in a different way from \cite{SY15,Yoshiki15}. Moreover, Suzuki \cite{Suzuki15_2} considered a \emph{weighted} space $\Fcal_{s,\bsu}$ of infinitely differentiable functions and studied tractability of multivariate integration in $\Fcal_{s,\bsu}$, where the positive real numbers $\bsu=(u_j)_{j\in \NN}$ are the weights. His result can be summarized as follows: There exists a good QMC rule 
using a digital net which achieves an \emph{super-polynomial convergence} of the worst-case error in $\Fcal_{s,\bsu}$ as $C(s)e^{-c(s)(\log N)^2}$, and moreover, the convergence can be independent of the dimension $s$ as $Ce^{-c(\log N)^p}$ for some $1<p<2$ under a certain condition on the weights $\bsu$. 

In this paper, beyond the existence result of \cite{Suzuki15_2}, we provide a constructive approach to finding good QMC rules achieving a dimension-independent super-polynomial convergence of the worst-case error. Specifically we prove that \emph{interlaced polynomial lattice rules} can be constructed using a fast component-by-component (CBC) algorithm, in at most $O(sN(\log N)^2)$ arithmetic operations, to achieve a dimension-independent super-polynomial convergence. As first studied in \cite{Goda15,Goda15_2,GD15}, interlaced polynomial lattice rules belong to the family of higher order digital nets and therefore achieve a higher order polynomial convergence of the integration error. We use them as QMC rules achieving a super-polynomial convergence in this paper. For this purpose, we are required to choose an interlacing factor depending on the number of points and the weights, instead of keeping it fixed (as for instance in \cite{Dick07, Dick08}). Furthermore, in order to show the worst-case error bound with a super-polynomial convergence, we purposely design a concave function to modify Jensen's inequality which has been often used in the literature to obtain error bounds with an improved rate of convergence.

Our approach requires to set the weights for constructing a tailored QMC rule, as often encountered in this type of construction algorithms. In practical applications, however, it is not always the case where one can know in advance to which function class the functions of interest belong. To work around this drawback, it must be interesting to study whether a good convergence property which such a tailored QMC rule holds for a specific function class can be also established for other function classes, as discussed for instance in \cite[Remark~1]{GD15}.
We observe in Section~\ref{sec:exp} that our constructed rules empirically work even for some functions not belonging to the target space.
In another direction for constructing a robust QMC rule working for many different function classes, one can implement a more elaborate construction algorithm as given in \cite{Dick12}. However, theoretical analysis of these issues is beyond the scope of this paper and we leave them open for further research.

The remainder of this paper is organized as follows. In the next section, we introduce the necessary background and notation, namely Walsh functions, a weighted space $\Fcal_{s,\bsu}$ of infinitely differentiable functions, our considering super-polynomial convergence and interlaced polynomial lattice rules. We also describe the main results of this paper. Namely, we introduce a component-by-component algorithm, state a result on the convergence behavior of interlaced polynomial lattice rules and discuss the dependence of the worst-case error bound on the dimension. In Section~\ref{sec:error}, we study the worst-case error in $\Fcal_{s,\bsu}$ for QMC rules using a digital net and derive a computable upper bound. We prove in Section~\ref{sec:cbc} that the CBC algorithm can be used to obtain good interlaced polynomial lattice rules which achieve a dimension-independent super-polynomial convergence of the worst-case error. Thereafter we describe the fast CBC algorithm using the fast Fourier transform as in \cite{NC06,NC06_2}, 
and show that interlaced polynomial lattice rules achieving a dimension-independent super-polynomial convergence can be constructed in at most $O(sN(\log N)^2)$ arithmetic operations using $O(N)$ memory. Finally, we conclude this paper with numerical experiments in Section~\ref{sec:exp}.

\section{Background, notation and results}\label{sec:background}
Throughout this paper, we use the following notation. Let $\NN$ be the set of positive integers and let $\NN_0:=\NN\cup \{0\}$. For a positive integer $b\ge 2$, let $\ZZ_b$ be a finite ring with $b$ elements, which we identify with the set $\{0,1,\dots,b-1\}$ equipped with addition and multiplication modulo $b$. For $x\in [0,1)$, we denote its $b$-adic expansion by $x=\sum_{i=1}^{\infty}\xi_i b^{-i}$ with $\xi_i\in \ZZ_b$ for all $i$, which is unique in the sense that infinitely many $\xi_i$ are different from $b-1$. The operators $\oplus$ and $\ominus$ denote digitwise addition and subtraction modulo $b$, respectively. That is, for $x, x'\in [0,1)$ whose unique $b$-adic expansions are $x=\sum_{i=1}^{\infty}\xi_i b^{-i}$ and $x'=\sum_{i=1}^{\infty}\xi'_i b^{-i}$, $\oplus$ and $\ominus$ are defined as
  \begin{align*}
    x\oplus x' = \sum_{i=1}^{\infty}\eta_i b^{-i}\quad \text{and}\quad x\ominus x' = \sum_{i=1}^{\infty}\eta'_i b^{-i},
  \end{align*}
where $\eta_i=\xi_i+\xi'_i \pmod b$ and $\eta'_i=\xi_i-\xi'_i \pmod b$, respectively. Similarly, we define digitwise addition and subtraction for non-negative integers based on their $b$-adic expansions. In case of vectors in $[0,1)^s$ or $\NN_0^s$, the operators $\oplus$ and $\ominus$ are applied componentwise.

\subsection{Walsh functions}\label{subsec:wal}
Walsh functions were first introduced in \cite{Walsh23} for the case $b=2$ and were later generalized to arbitrary base $b  \ge 2$, see for instance \cite{Chrestenson55}. We refer to \cite[Appendix~A]{DPbook} for more information on Walsh functions in the context of numerical integration. We first give the definition for the one-dimensional case.

\begin{definition}\label{def:wal_1}
Let $b\ge 2$ be a positive integer and let $\omega_b:=\exp(2\pi \sqrt{-1}/b)$ be a $b$-th root of unity. We denote the $b$-adic expansion of $k\in \NN_0$ by $k = \kappa_0+\kappa_1b+\dots+\kappa_{a-1}b^{a-1}$ with $\kappa_i\in \ZZ_b$. The $k$-th $b$-adic Walsh function ${}_b\wal_k: [0,1)\to \{1,\omega_b,\dots,\omega_b^{b-1}\}$ is defined as
  \begin{align*}
    {}_b\wal_k(x) := \omega_b^{\kappa_0\xi_1+\dots+\kappa_{a-1}\xi_a} ,
  \end{align*}
for $x\in [0,1)$ with its unique $b$-adic expansion $x=\xi_1b^{-1}+\xi_2b^{-2}+\cdots$.
\end{definition}
\noindent This definition can be generalized to the higher-dimensional case.

\begin{definition}\label{def:wal_s}
Let $b\ge 2$ be a positive integer. For a dimension $s\in \NN$, let $\bsx=(x_1,\ldots, x_s)\in [0,1)^s$ and $\bsk=(k_1,\ldots, k_s)\in \NN_0^s$. The $\bsk$-th $b$-adic Walsh function ${}_b\wal_{\bsk}: [0,1)^s \to \{1,\omega_b,\ldots, \omega_b^{b-1}\}$ is defined as
  \begin{align*}
    {}_b\wal_{\bsk}(\bsx) := \prod_{j=1}^s {}_b\wal_{k_j}(x_j) .
  \end{align*}
\end{definition}

Since we always use Walsh functions in a fixed base $b$, we omit the subscript and simply write $\wal_k$ or $\wal_{\bsk}$ in the remainder of this paper. From the fact that the system $\{\wal_{\bsk}: \bsk\in \NN_0^s\}$ is a complete orthonormal system in $L_2([0,1]^s)$ for any $s\in \NN$ \cite[Theorem~A.11]{DPbook}, we have a Walsh series expansion for any $f\in L_2([0,1]^s)$
  \begin{align*}
    \sum_{\bsk\in \NN_0^s}\hat{f}(\bsk)\wal_{\bsk} ,
  \end{align*}
where $\hat{f}(\bsk)$ denotes the $\bsk$-th Walsh coefficient of $f$, which is defined as
  \begin{align*}
    \hat{f}(\bsk) := \int_{[0,1]^s}f(\bsx)\overline{\wal_{\bsk}(\bsx)}\rd \bsx .
  \end{align*}
For continuous functions $f: [0,1]^s\to \RR$ for which $\sum_{\bsk\in \NN_0^s}|\hat{f}(\bsk)|<\infty$, the Walsh series of $f$ converges to $f$ pointwise absolutely. In fact, for any function $f : [0,1]^s \to \RR$ in a weighted space $\Fcal_{s,\bsu}$ which we consider in this paper, its Walsh series converges to $f$ pointwise absolutely.

\subsection{Weighted function space $\Fcal_{s,\bsu}$}\label{subsec:wal_space}
We first define the function $\mu(a;\cdot) : \NN_0\to \RR$ for a real number $a$. 

\begin{definition}\label{def:weight_1}
Let $a$ be a real number. For $k\in \NN$, we denote its $b$-adic expansion by $k = \kappa_1 b^{c_1-1}+\kappa_2 b^{c_2-1}+\cdots +\kappa_v b^{c_v-1}$ such that $\kappa_1,\dots,\kappa_v \in \{1,2,\dots,b-1\}$ and $c_1>\ldots > c_v >0$. The function $\mu(a;\cdot) : \NN_0\to \RR$ is defined as 
  \begin{align}\label{eq:weight_1}
    \mu(a;k) := \sum_{i=1}^{v}(c_i+a) ,
  \end{align}
and $\mu(a;0):=0$.
\end{definition}

\begin{remark}
Let us consider the case $a=0$. If the sum on the right-hand side of (\ref{eq:weight_1}) which runs over $i=1,\dots,v$ is replaced by the sum which runs over $i=1,\dots,\min(\alpha,v)$ for a fixed $\alpha\in \NN$, we recover the definitions by Niederreiter, Rosenbloom and Tsfasman in \cite{Nied86,RT97} for $\alpha=1$ and by Dick in \cite{Dick08} for $\alpha\ge 2$. Our function $\mu_a$ with $a=0$ has been used in \cite{MSM14,Suzuki15}. The parameter $a$ was included in the definition originally by Yoshiki \cite{Yoshiki15} for $a=1$ and later by Suzuki \cite{Suzuki15_2} for an arbitrary real number $a$.
\end{remark}
\noindent
For the higher-dimensional case, we consider a vector of $s$ real numbers $\bsa=(a_1,\dots,a_s)$ and define the function $\mu(\bsa;\cdot): \NN_0^s\to \RR$ as follows.

\begin{definition}\label{def:weight_s}
Let $\bsa=(a_1,\dots,a_s)$ be a vector of $s$ real numbers, and let $\bsk=(k_1,\dots,k_s)\in \NN_0^s$. The  function $\mu(\bsa;\cdot): \NN_0^s\to \RR$ is defined as
  \begin{align*}
    \mu(\bsa;\bsk) := \sum_{j=1}^{s}\mu(a_j;k_j) .
  \end{align*}
\end{definition}

We are now ready to introduce a weighted space $\Fcal_{s,\bsu}$ of infinitely differentiable functions. Let $\bsu=(u_j)_{j\in \NN}$ be a sequence of positive real numbers which we call \emph{weights}, and we assume that $u_1\ge u_2\ge \cdots >0$ throughout this paper.

\begin{definition}\label{def:function_space}
Let $\bsu=(u_j)_{j\in \NN}$ be a sequence of weights. We define a weighted space $\Fcal_{s,\bsu}$ as
  \begin{align*}
    F_{s,\bsu} := \left\{ f\in C^{\infty}([0,1]^s) : \lVert f\rVert_{F_{s,\bsu}} := \sup_{(\alpha_1,\ldots,\alpha_s)\in \NN_0^s}\frac{\lVert f^{(\alpha_1,\ldots,\alpha_s)}\rVert_{L^1}}{\prod_{j=1}^{s}u_j^{\alpha_j}}<\infty \right\} ,
  \end{align*}
where $f^{(\alpha_1,\ldots,\alpha_s)}$ denotes the $(\alpha_1,\ldots,\alpha_s)$-th mixed partial derivative of $f$, i.e., $(\partial/\partial x_1)^{\alpha_1} \cdots (\partial/\partial x_s)^{\alpha_s}f$.
\end{definition}

In the function space $F_{s,\bsu}$, $u_j$ small means that higher order partial mixed derivatives associated with the $j$-th coordinate must be relatively small. Thus, the weights $\bsu$ play a role in moderating the importance of different variables. Owing to the refined analyses of the Walsh coefficients in \cite{SY15,Yoshiki15}, it was shown that the Walsh coefficients of any function in $\Fcal_{s,\bsu}$ decay with a certain order, as we describe in the following. Let
  \begin{align*}
    m_b := \min_{c=1,2,\dots,b-1}|1-\overline{\omega_b}^{c}| = 2\sin(\pi/b), 
  \end{align*}
and
  \begin{align*}
    M_b := \max_{c=1,2,\dots,b-1}|1-\overline{\omega_b}^{c}| = \begin{cases}
    2 & \text{if $b$ is even,} \\
    2\sin((b+1)\pi /2b) & \text{if $b$ is odd.}
    \end{cases}
  \end{align*}
Moreover, let
  \begin{align*}
    C_b = \begin{cases}
    2 & \text{if $b=2$,} \\
    M_b+\frac{bm_b}{b-M_b} & \text{if $b\ne 2$.}
    \end{cases}
  \end{align*}
Then we have the following.

\begin{proposition}[\cite{SY15,Yoshiki15}]\label{prop:walsh_decay}
Let $\bsu=(u_j)_{j\in \NN}$ be a sequence of weights, and let $m_b$ and $C_b$ be constants depending only on $b$ as above. For any function $f$ in $\Fcal_{s,\bsu}$ and $\bsk\in \NN_0^s$, the $\bsk$-th Walsh coefficient of $f$ is bounded by
  \begin{align*}
   |\hat{f}(\bsk)| \le \lVert f\rVert_{F_{s,\bsu}} b^{-\mu(\bsa;\bsk)} ,
  \end{align*}
where $\bsa=(a_j)_{j\in \NN}$ is a sequence given by $a_j = -\log_b( C_b m_b^{-1} u_j)$ for all $j = 1, \ldots, s$.
\end{proposition}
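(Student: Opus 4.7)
The plan is to reduce the multivariate Walsh-coefficient bound to a one-dimensional estimate and then iterate coordinate by coordinate via Fubini's theorem. First I would invoke the one-dimensional decay bound proved in \cite{SY15,Yoshiki15}: for any $g\in C^{\infty}([0,1])$ and any $k\in \NN$ whose $b$-adic expansion is $k=\kappa_1 b^{c_1-1}+\cdots+\kappa_v b^{c_v-1}$ with $c_1>\cdots>c_v>0$, one has
\[
    |\hat g(k)| \le \left(\frac{C_b}{m_b}\right)^{v} b^{-(c_1+\cdots+c_v)} \lVert g^{(v)}\rVert_{L^1},
\]
while the case $k=0$ just gives $|\hat g(0)| \le \lVert g\rVert_{L^1}$. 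This is the sole nontrivial analytic input and is obtained by a Walsh-specific ``integration-by-parts'' identity applied one $b$-adic digit at a time.

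Next, writing $\hat f(\bsk) = \int_{[0,1]^s} f(\bsx) \prod_{j=1}^{s} \overline{\wal_{k_j}(x_j)}\,\rd \bsx$ as an iterated integral, I would apply the one-dimensional estimate coordinate by coordinate inside Fubini's theorem. Processing coordinate $j$ absorbs the factor $\overline{\wal_{k_j}(x_j)}$ at the cost of the multiplicative constant $(C_b/m_b)^{v_j} b^{-(c_{j,1}+\cdots+c_{j,v_j})}$ and replaces the integrand by its $v_j$-th partial derivative in $x_j$ (trivially, when $k_j=0$, no derivative is taken and the constant is $1$). After iterating over all $s$ coordinates and pulling absolute values inside the remaining integrals, one obtains
\[
    |\hat f(\bsk)| \le \prod_{j=1}^{s}\left(\frac{C_b}{m_b}\right)^{v_j} b^{-(c_{j,1}+\cdots+c_{j,v_j})} \cdot \lVert f^{(v_1,\ldots,v_s)}\rVert_{L^1}.
\]

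Finally, by the definition of the norm on $\Fcal_{s,\bsu}$, we have $\lVert f^{(v_1,\ldots,v_s)}\rVert_{L^1}\le \lVert f\rVert_{F_{s,\bsu}}\prod_{j=1}^{s}u_j^{v_j}$. Substituting $a_j=-\log_b(C_b m_b^{-1}u_j)$, so that $(C_b/m_b) u_j = b^{-a_j}$, each coordinate factor collapses via
\[
    \left(\frac{C_b}{m_b}\right)^{v_j} u_j^{v_j} b^{-(c_{j,1}+\cdots+c_{j,v_j})} = b^{-\sum_{i=1}^{v_j}(c_{j,i}+a_j)} = b^{-\mu(a_j;k_j)},
\]
and multiplying over $j$ yields exactly $b^{-\mu(\bsa;\bsk)}$, as required. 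The main obstacle is the one-dimensional lemma, since a naive digit-by-digit integration by parts produces only the worse constant $M_b/m_b$; the sharper constant $C_b/m_b$ requires the finer recursive decomposition carried out in \cite{SY15,Yoshiki15}. Once that lemma is in hand, the extension to $s$ dimensions is a mechanical Fubini iteration.
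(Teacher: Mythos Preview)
The paper does not give its own proof of this proposition; it is quoted from \cite{SY15,Yoshiki15} and used as a black box. Your outline correctly reconstructs the argument found there: the sharp one-dimensional Walsh decay with constant $C_b/m_b$, followed by a Fubini iteration over the coordinates and an appeal to the definition of $\lVert\cdot\rVert_{F_{s,\bsu}}$.

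One point to make precise: as written, you first state the one-dimensional input as an \emph{inequality} $|\hat g(k)|\le (C_b/m_b)^v b^{-(c_1+\cdots+c_v)}\lVert g^{(v)}\rVert_{L^1}$, but an inequality cannot be substituted back into the outer integrals. What makes the iteration work is the underlying \emph{identity} from \cite{SY15,Yoshiki15},
\[
   \hat g(k)=\int_0^1 g^{(v)}(x)\,W_k(x)\,\rd x,\qquad \lVert W_k\rVert_{L^\infty}\le (C_b/m_b)^{v}\, b^{-(c_1+\cdots+c_v)},
\]
which you allude to when you speak of an ``integration-by-parts identity'' and of ``replacing the integrand by its $v_j$-th partial derivative.'' With that identity in hand, applying it in each coordinate yields
\[
   \hat f(\bsk)=\int_{[0,1]^s} f^{(v_1,\dots,v_s)}(\bsx)\prod_{j=1}^{s}W_{k_j}(x_j)\,\rd\bsx,
\]
and then taking absolute values and using the $L^\infty$ bound on each $W_{k_j}$ gives exactly the displayed product bound, after which your final algebraic step is correct. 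So the approach is right; just be explicit that you are invoking the identity form, not merely the inequality.
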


\subsection{Super-polynomial convergence}\label{subsec:ac_conv}
From \cite{Suzuki15_2}, it is known that there exists a good QMC rule which achieves a dimension-independent super-polynomial convergence of the worst-case error in $\Fcal_{s,\bsu}$ under a certain condition on the weights $\bsu$. Here we briefly recall the result of \cite{Suzuki15_2}.

The initial error in $\Fcal_{s,\bsu}$ is given by the error of the zero algorithm, i.e.,
\begin{align*}
  e^{\wor}(\Fcal_{s,\bsu};\emptyset) = \sup_{\substack{f\in \Fcal_{s,\bsu}\\ \lVert f\rVert_{\Fcal_{s,\bsu}}\le 1}}\left| I(f)\right| ,
\end{align*}
which indeed equals 1 for any $s$ and $\bsu$. Hence the integration problem in $\Fcal_{s,\bsu}$ is well normalized. The worst-case error in $\Fcal_{s,\bsu}$ for a QMC rule using a point set $P$ is defined as
\begin{align*}
  e^{\wor}(\Fcal_{s,\bsu};P) = \sup_{\substack{f\in \Fcal_{s,\bsu}\\ \lVert f\rVert_{\Fcal_{s,\bsu}}\le 1}}\left| I(f;P)-I(f)\right| .
\end{align*}

We are interested in a dimension-independent super-polynomial convergence of the worst-case error of the form
\begin{equation}\label{eq:acc_conv}
e^{\wor}(\Fcal_{s,\bsu};P) \leq C e^{-c (\log{n})^{p}} \qquad \text{for all $n, s \in \NN$},
\end{equation}
where $C$ and $c$ are positive constants independent of $n$ and $s$.
The following existence result is from \cite{Suzuki15_2}.


\begin{theorem}[\cite{Suzuki15_2}]\label{thm:suzuki}
Consider the integration problem in the weighted function space $\Fcal_{s,\bsu}$ for a sequence of weights $\bsu$. If $\bsu$ satisfies $\liminf_{j\to \infty}\log(u_j^{-1})/j^r>0$ for $r>0$, then there exists a QMC rule which achieves a dimension-independent super-polynomial convergence of the worst-case error in $\Fcal_{s,\bsu}$ as \eqref{eq:acc_conv} with $p=(2r+1)/(r+1)$.
\end{theorem}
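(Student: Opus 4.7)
My plan is to combine the Walsh-coefficient bound of Proposition~\ref{prop:walsh_decay} with an averaging argument over a family of interlaced polynomial lattice rules, letting the interlacing factor grow with $N$, and then to convert the resulting polynomial gain into a super-polynomial rate through Jensen's inequality applied with a carefully tuned concave function.

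First, for any $f\in\Fcal_{s,\bsu}$ with $\lVert f\rVert_{\Fcal_{s,\bsu}}\le 1$ and any digital point set $P$ of cardinality $N$, orthogonality of the Walsh system together with Proposition~\ref{prop:walsh_decay} yields
\begin{align*}
 e^{\wor}(\Fcal_{s,\bsu};P) \le \sum_{\bsk\in D(P)\setminus\{\bszero\}} b^{-\mu(\bsa;\bsk)},
\end{align*}
where $D(P)\subset\NN_0^s$ is the dual net of $P$ and $a_j=-\log_b(C_b m_b^{-1}u_j)$. The task therefore reduces to exhibiting a digital point set $P$ whose dual-net sum is super-polynomially small in $\log N$.

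Next, fix an interlacing factor $\alpha\in\NN$ and an exponent $\lambda\in(0,1]$, both to be chosen as functions of $N$ and $\bsu$. Averaging the $\lambda$-th power of the dual-net sum over all interlaced polynomial lattice rules with $N=b^m$ points and using the subadditivity inequality $(\sum_i x_i)^\lambda\le\sum_i x_i^\lambda$ (i.e.\ Jensen for $x\mapsto x^\lambda$) produces a bound of the form
\begin{align*}
 \bigl(\mathrm{avg}_P\,e^{\wor}(\Fcal_{s,\bsu};P)\bigr)^\lambda \le \frac{1}{N^{\lambda\alpha}}\prod_{j=1}^{s}\bigl(1+\Phi(\lambda,\alpha,a_j)\bigr),
\end{align*}
in which $\Phi(\lambda,\alpha,a_j)$ aggregates the coordinate-$j$ contributions from nontrivial dual indices and decays geometrically in $a_j$. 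Selecting a rule that attains the average then yields the bound to the $1/\lambda$ power. The hypothesis $\liminf_{j\to\infty}\log(u_j^{-1})/j^r>0$ forces $a_j\ge c j^r$ for all sufficiently large $j$, so $\sum_j\log(1+\Phi(\lambda,\alpha,a_j))$ is bounded uniformly in $s$, which is how dimension-independence emerges.

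It then remains to optimize $\alpha$ and $\lambda$ against the weight profile $a_j\sim j^r$. Only the first $O(\alpha^{1/r})$ coordinates contribute substantially (the tail is killed by the rapid growth of $a_j$), so the loss from the product balances the polynomial gain $N^{-\lambda\alpha}$ when $\alpha$ is taken as a power of $\log N$ with exponent $r/(r+1)$ and $\lambda$ is scaled accordingly; this produces the claimed exponent $p=(2r+1)/(r+1)$ in $\exp(-c(\log N)^p)$. The main obstacle is precisely this joint optimization: balancing $\alpha$, $\lambda$, and the effective dimension against the weight growth $r$ demands a concave function in Jensen's inequality that is perfectly aligned with that growth, and extracting the exact exponent from this balance is the technical heart of the argument.
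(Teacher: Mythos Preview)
The paper does not prove Theorem~\ref{thm:suzuki}; it is quoted without proof from \cite{Suzuki15_2} as background for the paper's own constructive results (Theorem~\ref{thm:cbc_bound} and Corollary~\ref{cor:cbc_bound}). So there is no ``paper's proof'' to compare against directly, but your sketch can still be assessed against the machinery the paper develops for its closely related results, which follows Suzuki's method.

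Your argument has a genuine gap. The displayed averaging bound
\[
\bigl(\mathrm{avg}_P\,e^{\wor}\bigr)^\lambda \;\le\; N^{-\lambda\alpha}\prod_{j}(1+\Phi(\lambda,\alpha,a_j))
\]
is borrowed from the \emph{finite-smoothness} higher-order digital net theory, where the weight function truncates to the top $\alpha$ digits and one legitimately extracts a factor $N^{-\alpha}$. In $\Fcal_{s,\bsu}$, however, the Walsh-coefficient decay is governed by $\mu(\bsa;\bsk)$, which sums over \emph{all} nonzero digits (Definition~\ref{def:weight_1}). After averaging and applying $x\mapsto x^{\lambda}$ one obtains $(b^m-1)^{-1}\sum_{\bsk\neq\bszero} b^{-\lambda\mu(\bsa;\bsk)}$, which factorizes over coordinates and digit positions but does \emph{not} split off an $N^{-\lambda\alpha}$ factor; the interlacing order $d$ enters only through the truncation $k_j<b^{dm}$ (cf.\ Theorem~\ref{thm:cbc_bound}), not as a polynomial gain. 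Hence the balance you describe in the last paragraph --- trading $N^{-\lambda\alpha}$ against an effective dimension $O(\alpha^{1/r})$ --- does not arise from the computation you wrote.

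The paper is explicit about this obstruction: immediately after Theorem~\ref{thm:cbc_bound} it states that $\phi(x)=x^{\lambda}$ ``is not a good choice because it does not give us the worst-case error bound with a super-polynomial convergence,'' and instead introduces the concave function in \eqref{eq:jensen_b_2}--\eqref{eq:jensen_b_geq_3}, which sends $b^{-\mu(\bsa;\bsk)}$ to $b^{-(\mu(\bsa;\bsk))^{\lambda}}$. The super-polynomial rate then comes from a counting bound on $\lvert\{\bsk:\mu(\bsa;\bsk)<t\}\rvert$ growing like $\exp(C t^{(r+1)/(2r+1)})$ (see Lemma~\ref{lem:sum_phi_bound} and the cited results from \cite{Suzuki15_2}), which is what fixes the exponent $(2r+1)/(r+1)$. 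Your final paragraph gestures at a special concave function ``perfectly aligned with that growth,'' but since the preceding steps have already committed to $x^{\lambda}$ and to a bound shape that does not occur here, the sketch as written does not hang together: the tailored $\phi$ is not a finishing touch but the mechanism that makes the argument work.
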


\subsection{Interlaced polynomial lattice rules}\label{subsec:ipls}

Here we give the definition of interlaced polynomial lattice rules, which are based on polynomial lattice rules, introduced by Niederreiter \cite{Nied92}, and a digit interlacing composition, introduced by Dick \cite{Dick07,Dick08}. 

We first introduce polynomial lattice rules. In this subsection, let $b$ be a prime number, and let $\ZZ_b$ be the finite field with $b$ elements. We denote by $\ZZ_b[x]$ the set of all polynomials over $\ZZ_b$, and denote by $\ZZ_b((x^{-1}))$ the field of formal Laurent series over $\ZZ_b$. Every element of $\ZZ_b((x^{-1}))$ can be represented as
\begin{align*}
  L = \sum_{l=w}^{\infty}t_l x^{-l} ,
\end{align*}
for some integer $w$ and $t_l\in \ZZ_b$ for all $l$. For a given integer $m$, we define the mapping $v_m$ from $\ZZ_b((x^{-1}))$ to the interval $[0,1)$ by
  \begin{align*}
    v_m\left( \sum_{l=w}^{\infty}t_l x^{-l} \right) =\sum_{l=\max(1,w)}^{m}t_l b^{-l}.
  \end{align*}
A non-negative integer $k$ whose $b$-adic expansion is given by $k=\kappa_0+\kappa_1 b+\cdots +\kappa_{a-1} b^{a-1}$ will be identified with the polynomial $k(x)=\kappa_0+\kappa_1 x+\cdots +\kappa_{a-1} x^{a-1}\in \ZZ_b[x]$.  For $\bsk=(k_1,\ldots, k_s)\in (\ZZ_b[x])^s$ and $\bsq=(q_1,\ldots, q_s)\in (\ZZ_b[x])^s$, we define the inner product as
  \begin{align}\label{eq:inner_product}
     \bsk \cdot \bsq := \sum_{j=1}^{s}k_j q_j \in \ZZ_b[x] ,
  \end{align}
and we write $q\equiv 0 \pmod p$ if $p$ divides $q$ in $\ZZ_b[x]$. Using this notation, polynomial lattice rules are constructed as follows.

\begin{definition}\label{def:polynomial_lattice}
Let $m, s \in \NN$. Let $p \in \ZZ_b[x]$ such that $\deg(p)=m$ and let $\bsq=(q_1,\ldots,q_s) \in (\ZZ_b[x])^s$. A polynomial lattice point set $P(\bsq,p)$ is a set consisting of $b^m$ points $\bsx_0,\ldots,\bsx_{b^m-1}$ that are defined as
  \begin{align*}
    \bsx_n := \left( v_m\left( \frac{n(x)q_1(x)}{p(x)} \right) , \ldots , v_m\left( \frac{n(x)q_s(x)}{p(x)} \right) \right) \in [0,1)^s ,
  \end{align*}
for $0\le n<b^m$. A QMC rule using this point set is called a \emph{polynomial lattice rule} with generating vector $\bsq$ and modulus $p$.
\end{definition}

We add one more notation and introduce the concept of the so-called \emph{dual polynomial lattice} of a polynomial lattice point set. For $k\in \NN_0$ with its $b$-adic expansion $k= \kappa_0 + \kappa_1 b+\cdots + \kappa_{a-1} b^{a-1}$, let $\rtr_m(k)$ be the polynomial of degree at most $m$ obtained by truncating the associated polynomial $k(x)\in \ZZ_b[x]$ as
  \begin{align*}
    \rtr_m(k)= \kappa_0 + \kappa_1 x+\cdots + \kappa_{m-1}x^{m-1},
  \end{align*}
where we set $\kappa_{a} = \cdots = \kappa_{m-1} = 0$ if $a< m$. For a vector $\bsk=(k_1,\ldots, k_s)\in \NN_0^s$, we define $\rtr_m(\bsk)=(\rtr_m(k_1),\ldots, \rtr_m(k_s))$. With this notation, we introduce the following definition of the dual polynomial lattice $P^{\perp}(\bsq,p)$.
\begin{definition}\label{def:dual_net}
The dual polynomial lattice of a polynomial lattice point set with modulus $p\in \ZZ_b[x]$, $\deg(p)=m$, and generating vector $\bsq \in (\ZZ_b[x])^s$ is given by
  \begin{align*}
     P^{\perp}(\bsq,p)  = \{ \bsk\in \NN_0^s:\ \mathrm{tr}_m(\bsk)\cdot \bsq\equiv 0 \pmod p \} ,
  \end{align*}
where the inner product is in the sense of (\ref{eq:inner_product}).
\end{definition}
\noindent
The following important lemma relates the dual polynomial lattice to numerical integration of Walsh functions, see \cite[Lemmas~4.75 and 10.6]{DPbook} for the proof.
\begin{lemma}\label{lem:dual_walsh}
Let $P(\bsq,p)=\{\bsx_0,\bsx_1,\dots,\bsx_{b^m-1}\}\subset [0,1)^{s}$ be a polynomial lattice point set with modulus $p\in \ZZ_b[x]$, $\deg(p)=m$, and generating vector $\bsq \in (\ZZ_b[x])^s$, and let $P^{\perp}(\bsq,p)$ be its dual polynomial lattice. Then we have
  \begin{align*}
    \frac{1}{b^m}\sum_{n=0}^{b^m-1}\wal_{\bsk}(\bsx_n)= \begin{cases}
     1 & \text{if $\bsk\in P^{\perp}(\bsq,p)$,} \\
     0 & \text{otherwise.} \\
     \end{cases}
  \end{align*}
\end{lemma}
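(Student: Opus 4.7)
The plan is to reduce the Walsh sum on the left-hand side to a geometric sum over $\ZZ_b$ by exploiting the correspondence between the $b$-adic representation of numbers in $[0,1)$ and Laurent series in $\ZZ_b((x^{-1}))$. The central identity I would establish first is the following: for any $\bsk \in \NN_0^s$ and any $0 \le n < b^m$,
\[
  \wal_{\bsk}(\bsx_n) \;=\; \omega_b^{\,\phi(n)}, \qquad \phi(n) \;:=\; \left[\, x^{-1}\,\right]\!\left( \frac{n(x)\,(\rtr_m(\bsk)\cdot \bsq)(x)}{p(x)} \right),
\]
where $[x^{-1}](\cdot)$ denotes the coefficient of $x^{-1}$ in the Laurent expansion at infinity. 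To prove this, I would unroll the definitions: writing $k_j(x) = \kappa_{j,0}+\kappa_{j,1}x+\dots$ and expanding $v_m(n(x)q_j(x)/p(x)) = \sum_{l=1}^{m} t_{j,l}\,b^{-l}$, a direct computation shows
\[
  \wal_{k_j}(v_m(n(x)q_j(x)/p(x))) \;=\; \omega_b^{\sum_{i\ge 0} \kappa_{j,i}\,t_{j,i+1}},
\]
and the inner exponent is precisely $[x^{-1}]\bigl(\rtr_m(k_j)(x)\cdot n(x)q_j(x)/p(x)\bigr)$. Summing over $j$ and using bilinearity gives the identity above. This is the step where I expect all the technical bookkeeping to live; one must be careful that truncating $k_j$ to degree $<m$ does not affect the coefficient of $x^{-1}$, which is exactly the reason the operator $\rtr_m$ appears in Definition~\ref{def:dual_net}.

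With the identity in hand, write $\rtr_m(\bsk)\cdot\bsq = s(x) p(x) + r(x)$ with $\deg r < m$. Since $n(x)s(x)$ is a polynomial, it contributes nothing to the coefficient of $x^{-1}$, so $\phi(n) = [x^{-1}](n(x)r(x)/p(x))$. If $\bsk \in P^{\perp}(\bsq,p)$, then $r \equiv 0$, hence $\phi(n)=0$ for every $n$, and $\frac{1}{b^m}\sum_n \omega_b^0 = 1$ as claimed.

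Suppose instead $\bsk \notin P^{\perp}(\bsq,p)$, so $r \neq 0$. Expand the proper rational function $r(x)/p(x) = \sum_{l\ge 1} t_l x^{-l}$; then for $n(x)=\sum_{i=0}^{m-1} n_i x^i$ one has $\phi(n)=\sum_{i=0}^{m-1} n_i t_{i+1}$, a $\ZZ_b$-linear functional on $\ZZ_b^m$. The key subclaim is that $(t_1,\ldots,t_m) \neq \mathbf{0}$: otherwise $r(x)/p(x) = O(x^{-m-1})$ at infinity, which forces the polynomial $r$ to vanish since $\deg p = m$, contradicting $r \neq 0$. Hence $\phi : \ZZ_b^m \to \ZZ_b$ is surjective, so each value in $\ZZ_b$ is attained by exactly $b^{m-1}$ choices of $n$, and
\[
  \sum_{n=0}^{b^m-1} \omega_b^{\phi(n)} \;=\; b^{m-1}\sum_{\kappa=0}^{b-1}\omega_b^{\kappa} \;=\; 0,
\]
yielding the second case.

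The main obstacle is really only the first step: carefully matching the $b$-adic Walsh exponent with the coefficient of $x^{-1}$ in the formal Laurent product, and verifying that the truncation $\rtr_m$ does not interfere. Once that identification is clean, the rest is a division-with-remainder argument in $\ZZ_b[x]$ together with elementary character orthogonality on the additive group $\ZZ_b$.
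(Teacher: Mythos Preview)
Your argument is correct. The paper does not actually prove Lemma~\ref{lem:dual_walsh}; it simply cites \cite[Lemmas~4.75 and 10.6]{DPbook}, so there is no in-paper proof to compare against. What you have written is essentially the standard proof from that reference: identify the Walsh exponent with the $x^{-1}$-coefficient of the relevant Laurent series, reduce modulo $p$, and finish with character orthogonality on the additive group $\ZZ_b^m$. Your handling of the truncation $\rtr_m$ is exactly right---the map $v_m$ kills digits beyond position $m$, so only the first $m$ $b$-adic digits of each $k_j$ contribute to the Walsh exponent, which is precisely why $\rtr_m$ appears in the definition of the dual lattice.
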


We introduce the digit interlacing composition next. Let $d$ be a positive integer called interlacing factor, and let $\bsx=(x_1,\ldots,x_d)$ be a generic point in $[0,1)^d$ whose unique $b$-adic expansions are given by $x_j=\sum_{i=1}^{\infty}\xi_{i,j} b^{-i}$. Then the digit interlacing function $\Dcal_d: [0,1)^d \to [0,1)$ is defined as
  \begin{align*}
    \Dcal_d(\bsx) := \sum_{i=1}^{\infty}\sum_{j=1}^{d}\xi_{i,j}b^{-d(i-1)-j} .
  \end{align*}
We also define such a function for $ds$-dimensional vectors $\bsx=(x_1,\ldots,x_{ds})$ by applying $\Dcal_d$ to every consecutive $d$ components, that is,
  \begin{align*}
    \Dcal_d(\bsx) := \left( \Dcal_d(x_1,\dots,x_d), \Dcal_d(x_{d+1},\dots,x_{2d}), \ldots, \Dcal_d(x_{d(s-1)+1},\dots,x_{ds})\right) .
  \end{align*}
Now we are ready to introduce the definition of interlaced polynomial lattice rules \cite{Goda15,Goda15_2,GD15}.

\begin{definition}\label{def:interlacing_polynomial_lattice}
Let $m,s,d \in \NN$. Let $p \in \ZZ_b[x]$ such that $\deg(p)=m$ and let $\bsq=(q_1,\ldots,q_{ds}) \in (\ZZ_b[x])^{ds}$. An interlaced polynomial lattice point set $\Dcal_d(P(\bsq,p))$ of order $d$ is a set consisting of $b^m$ points defined as
  \begin{align*}
    \Dcal_d(P(\bsq,p)) := \left\{ \Dcal_d(\bsx) : \bsx \in P(\bsq,p) \right\} .
  \end{align*}
A QMC rule using this point set is called an \emph{interlaced polynomial lattice rule} of order $d$ with generating vector $\bsq$ and modulus $p$.
\end{definition}

\subsection{The results}\label{subsec:result}
We now describe the main results of this paper. In the following, let $b$ be a prime number and let $m,s,d\in \NN$. For $p \in \ZZ_b[x]$ with $\deg(p)=m$ and $\bsq=(q_1,\ldots,q_{ds}) \in (\ZZ_b[x])^{ds}$, we denote the polynomial lattice point set by $P(\bsq,p)=\{\bsx_0,\dots,\bsx_{b^m-1}\}\subset [0,1)^{ds}$ with $\bsx_n=(x_{n,1},x_{n,2},\dots,x_{n,ds})$, and denote the $b$-adic expansion of $x_{n,j}$ by $x_{n,j}=\sum_{i=1}^{\infty}\xi_{i,n,j} b^{-i}$ for $0\le n<b^m$ and $1\le j\le ds$. Moreover, we denote the interlaced polynomial lattice point set by $\Dcal_d(P(\bsq,p))=\{\bsy_0,\dots,\bsy_{b^m-1}\}\subset [0,1)^{s}$, where $\bsy_n=\Dcal_d(\bsx_n)$ for $0\le n<b^m$.

Let $\bsu$ be a sequence of weights, and as in Proposition~\ref{prop:walsh_decay}, let $\bsa = (a_j)_{j\in \NN}$ be the sequence given by \[a_j = -\log_b( C_b m_b^{-1} u_j), \quad j \in \mathbb{N}.\] In Section~\ref{sec:error}, we show that the worst-case error in $\Fcal_{s,\bsu}$ by a QMC rule using $\Dcal_d(P(\bsq,p))$ as quadrature points is bounded by
\begin{align*}
  e^{\wor}(\Fcal_{s,\bsu};\Dcal_d(P(\bsq,p))) \le C_{\bsu}-1+C_{\bsu}B_{\bsu}(\bsq,p) ,
\end{align*}
where $C_{\bsu}$ and $B_{\bsu}(\bsq,p)$ are given by
\begin{align*}
  C_{\bsu}=\prod_{j=1}^{s}\prod_{h=1}^{d}\prod_{i=m+1}^{\infty}\left\{ 1+\frac{b-1}{b^{d(i-1)+h+a_j}}\right\} ,
\end{align*}
and
\begin{align*}
  B_{\bsu}(\bsq,p) = -1+\frac{1}{b^m}\sum_{n=0}^{b^m-1}\prod_{j=1}^{s}\prod_{h=1}^{d}\prod_{i=1}^{m}\left\{ 1+\frac{\eta(\xi_{i,n,d(j-1)+h})}{b^{d(i-1)+h+a_j}}\right\} ,
\end{align*}
respectively, where $\eta: \{0,1,\dots,b-1\}\to \RR$ is defined as
\begin{align*}
  \eta(\xi) := \begin{cases}
  b-1 & \text{if $\xi=0$,} \\
  -1  & \text{otherwise.}
  \end{cases}
\end{align*}
Since $C_{\bsu}$ is independent of the modulus $p$ and generating vector $\bsq$, $B_{\bsu}(\bsq,p)$ can be used as a quality criterion for searching for good $p$ and $\bsq$. In the following we introduce the CBC algorithm.

We restrict $q_j$, $1\le j\le ds$, to non-zero polynomials over $\ZZ_b$ with its degree less than $m$, where $m=\deg(p)$. Provided that $p$ is irreducible, we can set $q_1=1$ without loss of generality. We denote by $R_{b,m}$ the set of all non-zero polynomials over $\ZZ_b$ with degree less than $m$, i.e.,
	\begin{align*}
		R_{b,m}=\{ q\in \ZZ_b[x]: \deg(q)<m\ \text{and}\ q\ne0\} .
	\end{align*}
We note that $|R_{b,m}|=b^m-1$. Further, we write $\bsq_{\tau}=(q_1,\ldots, q_{\tau})$ for $1\le \tau\le ds$. The idea is now to search for the polynomials $q_j \in R_{b,m}$ component-by-component. To do so, we need to define $B_{\bsu}(\bsq_{\tau},p)$ for arbitrary $1 \le \tau \le ds$. This is done in the following way. Let $1 \le \tau \le ds$ and $\beta = \lceil \tau/d \rceil$. Then
\begin{align}\label{eq:B_u_tau}
  B_{\bsu}(\bsq_{\tau},p) & = -1+\frac{1}{b^m}\sum_{n=0}^{b^m-1}\prod_{j=1}^{\beta-1}\prod_{h=1}^{d}\prod_{i=1}^{m}\left\{ 1+\frac{\eta(\xi_{i,n,d(j-1)+h})}{b^{d(i-1)+h+a_j}}\right\} \nonumber \\
  & \qquad \times \prod_{h=1}^{\tau -d(\beta-1)}\prod_{i=1}^{m}\left\{ 1+\frac{\eta(\xi_{i,n,d(\beta-1)+h})}{b^{d(i-1)+h+a_{\beta}}}\right\}.
\end{align}

The CBC construction proceeds as follows.
\begin{algorithm}\label{algorithm:cbc}
Let $b,m,s,d,\bsu=(u_j)_{j\in \NN}$ be as above.
	\begin{enumerate}
		\item Choose an irreducible polynomial $p\in \ZZ_b[x]$ with $\deg(p)=m$.
		\item Set $q_1=1$.
		\item For $\tau=2,\ldots, ds$, find $q_{\tau}$ by minimizing $B_{\bsu}((\bsq_{\tau-1},q),p)$ as a function of $q\in R_{b,m}$.
	\end{enumerate}
\end{algorithm}
\noindent
In Subsection~\ref{subsec:fast_cbc}, we show that one can also use the fast CBC algorithm of \cite{NC06,NC06_2} to find good generating vectors.

Next we show that the generating vector $\bsq$ found by Algorithm~\ref{algorithm:cbc} satisfies the following bound.

\begin{theorem}\label{thm:cbc_bound}
Let $b$ be a prime and $p\in \ZZ_b[x]$ be irreducible with $\deg(p)=m$. Let $\phi:[0,\infty)\to [0,\infty)$ be a concave and unbounded monotonic increasing function. Suppose that $\bsq=(q_1,\ldots, q_{ds})$ is constructed using Algorithm \ref{algorithm:cbc}. Then we have
\begin{align*}
  B_{\bsu}(\bsq,p) \le \phi^{-1}\left[ \frac{1}{b^m-1}\sum_{\substack{\bsk\in \NN_0^s\setminus \{\bszero\}\\ k_j<b^{dm}, \forall j}}\phi\left(b^{-\mu(\bsa;\bsk)}\right)\right] .
\end{align*}
\end{theorem}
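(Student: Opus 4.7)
My plan is to rewrite $B_{\bsu}(\bsq_\tau,p)$ as a sum over a truncation of the dual polynomial lattice for every $1\le\tau\le ds$, and then induct on $\tau$, combining the standard CBC observation that the minimum over $q_\tau$ is at most the average over $q_\tau$ with a subadditivity property of $\phi$ that follows from its concavity. Applying $\phi^{-1}$ to the bound obtained at $\tau=ds$ will yield the assertion.

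The first ingredient is an expansion of the triple product in~\eqref{eq:B_u_tau}. Using the identity $\eta(\xi)=\sum_{\kappa=1}^{b-1}\omega_b^{\kappa\xi}$, each factor splits into the constant $1$ plus a sum of Walsh terms $b^{-(d(i-1)+h+a_j)}\wal_{\kappa b^{i-1}}(x_{n,d(j-1)+h})$ indexed by $\kappa\in\{1,\dots,b-1\}$. Multiplying out over all $(j,h,i)$, grouping monomials by the induced multi-index $\bsk\in K_\tau:=\{\bsk\in\NN_0^\tau:k_l<b^m\text{ for all }l\}$, and averaging over $n=0,\dots,b^m-1$ via Lemma~\ref{lem:dual_walsh} gives
\[
B_{\bsu}(\bsq_\tau,p)=\sum_{\substack{\bsk\in K_\tau\setminus\{\bszero\}\\ \bsk\cdot\bsq_\tau\equiv0\pmod p}}b^{-\mu_\tau(\bsk)},
\]
where $\mu_\tau(\bsk)$ is the partial weight functional read off from the exponents in~\eqref{eq:B_u_tau}. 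At $\tau=ds$, grouping sub-indices in blocks of $d$ and using digit interlacing identifies $K_{ds}$ with $\{\bsk\in\NN_0^s:k_j<b^{dm}\}$ and $\mu_{ds}$ with $\mu(\bsa;\cdot)$, as required. Separately, any concave $\phi:[0,\infty)\to[0,\infty)$ is subadditive: concavity gives $\phi(x)\ge\frac{x}{x+y}\phi(x+y)+\frac{y}{x+y}\phi(0)$ and symmetrically for $y$, so $\phi(x)+\phi(y)\ge\phi(x+y)+\phi(0)\ge\phi(x+y)$, and iterating yields $\phi(\sum_i c_i)\le\sum_i\phi(c_i)$ for non-negative summands.

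The induction hypothesis is
\[
\phi\bigl(B_{\bsu}(\bsq_\tau,p)\bigr)\le\frac{1}{b^m-1}\sum_{\bsk\in K_\tau\setminus\{\bszero\}}\phi\bigl(b^{-\mu_\tau(\bsk)}\bigr).
\]
The base case $\tau=1$ is immediate because $q_1=1$ and irreducibility of $p$ force $B_{\bsu}(\bsq_1,p)=0$, while each term on the right is at least $\phi(0)$ by monotonicity of $\phi$. For $\tau\ge 2$, monotonicity together with the CBC choice yields
\[
\phi\bigl(B_{\bsu}(\bsq_\tau,p)\bigr)\le\frac{1}{|R_{b,m}|}\sum_{q\in R_{b,m}}\phi\bigl(B_{\bsu}((\bsq_{\tau-1},q),p)\bigr).
\]
I would then decompose the inner argument as the $k_\tau=0$ contribution (which equals $B_{\bsu}(\bsq_{\tau-1},p)$, independent of $q$) plus the $q$-dependent remainder $\Delta_\tau(q):=\sum_{\bsk:\,k_\tau\ne0,\,\bsk\cdot(\bsq_{\tau-1},q)\equiv 0\pmod p}b^{-\mu_\tau(\bsk)}$, apply subadditivity to separate the two pieces, break $\Delta_\tau(q)$ into individual Walsh terms by subadditivity again, and interchange summation in $q$ and $\bsk$. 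For each $\bsk$ with $k_\tau\ne 0$, irreducibility of $p$ makes $k_\tau$ invertible modulo $p$, so at most one $q\in R_{b,m}$ satisfies $\bsk\cdot(\bsq_{\tau-1},q)\equiv 0\pmod p$; this produces the factor $1/(b^m-1)$ and, combined with the inductive hypothesis applied to the $k_\tau=0$ piece, closes the induction.

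The main obstacle will be the bookkeeping in the Walsh expansion: identifying exactly which multi-index $\bsk$ corresponds to each surviving monomial after averaging over $n$, and verifying that the partial weight $\mu_\tau$ read off from~\eqref{eq:B_u_tau} reduces at $\tau=ds$, under digit interlacing, to $\mu(\bsa;\cdot)$ on $\{\bsk\in\NN_0^s:k_j<b^{dm}\}$. Once that is in place, the rest is a familiar Jensen-type CBC argument; the only new point is that subadditivity is extracted directly from concavity of an arbitrary $\phi$, rather than being tied to the specific choice $\phi(x)=x^\lambda$ commonly used in the literature, which is what will later allow us to tailor $\phi$ to produce the super-polynomial convergence rate.
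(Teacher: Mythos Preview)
Your proposal is correct and follows essentially the same route as the paper: the paper also expresses $B_{\bsu}(\bsq_\tau,p)$ as the truncated dual-lattice sum $\sum_{\bsk\in P^\perp(\bsq_\tau,p)\setminus\{\bszero\},\,k_j<b^m}b^{-\tilde\mu(\bsa;\bsk)}$, inducts on $\tau$ using the min-$\le$-average argument together with subadditivity of $\phi$ and the ``at most one $q$'' count from irreducibility of $p$, and finishes by the interlacing bijection $\Ecal_d$ and the identity $\mu(\bsa;\Ecal_d(\bsk))=\tilde\mu(\bsa;\bsk)$. The only cosmetic difference is the order of operations in the inductive step: the paper first isolates the $k_\tau\ne 0$ part $\theta_{\bsu}$ (noting that minimizing $B_{\bsu}$ in $q$ is equivalent to minimizing $\theta_{\bsu}$) and averages $\phi(\theta_{\bsu})$, whereas you average $\phi(B_{\bsu})$ first and then split via subadditivity---both are valid and yield the identical bound.
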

\noindent 
The proof of this result is presented in Subsection~\ref{subsec:cbc}. 

The function $\phi(x)=x^{\lambda}$, $0<\lambda\le 1$, has been often used to obtain these types of error bounds in the literature. In this case, one may apply so-called Jensen's inequality
\begin{align}\label{eq:jensen}
  \phi\left( \sum_{n}c_n\right) \le \sum_{n}\phi(c_n),
\end{align}
for any sequence of non-negative real numbers $(c_n)$. The inequality (\ref{eq:jensen}), however, also holds for any concave function $\phi:[0,\infty)\to [0,\infty)$ \cite[Section~2.3]{DHP15}. In our case, the function $\phi(x)=x^{\lambda}$ is not a good choice because it does not give us the worst-case error bound with a super-polynomial convergence. Instead we use $\phi$ which maps $b^{-\mu(\bsa;\bsk)}$ to $b^{-(\mu(\bsa;\bsk))^{\lambda}}$ for $0<\lambda\le 1$. Such a map can be designed as follows. For $b=2$, let $\tilde{x}_{\lambda}=2^{-(\log 2)^{1/\lambda}}$ for $0<\lambda\le 1$. Then
\begin{align}\label{eq:jensen_b_2}
  \phi(x) = \begin{cases}
  2^{-(\log_2 (1/x))^{1/\lambda}} & \text{if $0< x< \tilde{x}_{\lambda}$,} \\
  \frac{\lambda \log_2(\tilde{x}_{\lambda})^{\lambda-1}}{e\tilde{x}_{\lambda}}(x-\tilde{x}_{\lambda})+\frac{1}{e} & \text{if $x\ge  \tilde{x}_{\lambda}$.}
  \end{cases}
\end{align}
For $b\ge 3$, 
\begin{align}\label{eq:jensen_b_geq_3}
  \phi(x) = \begin{cases}
  b^{-(\log_b (1/x))^{1/\lambda}} & \text{if $0< x< \frac{1}{b}$,} \\
  \lambda \left( x-\frac{1}{b}\right)+\frac{1}{b} & \text{if $x\ge  \frac{1}{b}$.}
  \end{cases}
\end{align}
Note that we set $\phi(0)=0$ for any $b$ and $0<\lambda\le 1$, and that the function $\phi$ is concave and unbounded monotonic increasing on $[0,\infty)$. As above we need a slight modification for the case $b=2$ since the function $\phi(x) =2^{-(\log_2 (1/x))^{1/\lambda}}$ is concave over the interval $(0,\tilde{x}_{\lambda})$ but not over the interval $(0,1/2)$. Using this function and under the same condition on the weights with Theorem~\ref{thm:suzuki}, we have the following corollary of Theorem~\ref{thm:cbc_bound}.

\begin{corollary}\label{cor:cbc_bound}
Assume that $\bsu$ satisfies $\liminf_{j\to \infty}\log(u_j^{-1})/j^r >0$ for $r>0$. Let $b$ be a prime and $p\in \ZZ_b[x]$ be irreducible with $\deg(p)=m$. Suppose that $\bsq=(q_1,\ldots, q_{ds})$ is constructed using Algorithm \ref{algorithm:cbc}. Then there exist constants $D_{r,\lambda},E_{r,\lambda}>0$ both independent of $s$ such that we have
\begin{align*}
  B_{\bsu}(\bsq,p) \le E_{r,\lambda}b^{-\left(\log_b\left(\frac{b^m-1}{D_{r,\lambda}}\right)\right)^{1/\lambda}} ,
\end{align*}
for any $(r+1)/(2r+1)<\lambda \le 1$. Moreover, by setting $d\ge m^{r/(r+1)}$, the worst-case error satisfies the bound
\begin{align*}
  e^{\wor}(\Fcal_{s,\bsu};\Dcal_d(P(\bsq,p)))  \le E'_{r,\lambda}b^{-\left(\log_b\left(\frac{b^m-1}{D_{r,\lambda}+1}\right)\right)^{1/\lambda}} ,
\end{align*}
where $E'_{r,\lambda}>0$ is a constant independent of $s$.
\end{corollary}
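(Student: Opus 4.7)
The plan is to apply Theorem~\ref{thm:cbc_bound} with the concave function $\phi$ designed in (\ref{eq:jensen_b_2})--(\ref{eq:jensen_b_geq_3}). Writing the right-hand-side bracket there as
\begin{align*}
\Sigma := \sum_{\substack{\bsk\in\NN_0^s\setminus\{\bszero\}\\ k_j<b^{dm},\,\forall j}}\phi\bigl(b^{-\mu(\bsa;\bsk)}\bigr),
\end{align*}
the core task is to bound $\Sigma$ by a constant independent of $s$. The tailored $\phi$ acts on its principal regime as $\phi(b^{-\mu})=b^{-\mu^{1/\lambda}}$; since $1/\lambda\ge 1$ (because $\lambda\le 1$), the map $t\mapsto t^{1/\lambda}$ is super-additive on $[0,\infty)$ (as $t^{1/\lambda-1}$ is non-decreasing), which yields
\begin{align*}
\mu(\bsa;\bsk)^{1/\lambda}\ge\sum_{j=1}^{s}\mu(a_j;k_j)^{1/\lambda},\qquad\phi\bigl(b^{-\mu(\bsa;\bsk)}\bigr)\le\prod_{j=1}^{s}b^{-\mu(a_j;k_j)^{1/\lambda}}.
\end{align*}
The finitely many $\bsk$ for which $b^{-\mu(\bsa;\bsk)}$ falls in the linear regime of $\phi$ are controlled by a uniform additive constant.

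Next, I would exploit the factorization:
\begin{align*}
\Sigma+1 \le \prod_{j=1}^{s}(1+\Sigma_j),\qquad\Sigma_j:=\sum_{k\ge 1}b^{-\mu(a_j;k)^{1/\lambda}},
\end{align*}
and show $\sum_j\Sigma_j<\infty$ uniformly in $s$. The weight hypothesis $\liminf_{j\to\infty}\log(u_j^{-1})/j^r>0$, combined with $a_j=-\log_b(C_b m_b^{-1}u_j)$, gives $a_j\ge c_0 j^r$ for some $c_0>0$ and all sufficiently large $j$. Since $\min_{k\ge 1}\mu(a_j;k)=1+a_j$, the leading term of $\Sigma_j$ is of order $b^{-(1+a_j)^{1/\lambda}}\lesssim b^{-c_0^{1/\lambda}j^{r/\lambda}}$, which is summable over $j$; the remaining terms inside each $\Sigma_j$ form a geometrically convergent series that only multiplies the estimate by a bounded factor. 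Hence $\prod_j(1+\Sigma_j)\le 1+D_{r,\lambda}$ with $D_{r,\lambda}$ independent of $s$. The first displayed bound of the corollary then follows by substituting into Theorem~\ref{thm:cbc_bound} and using the explicit form $\phi^{-1}(y)=b^{-(\log_b(1/y))^{1/\lambda}}$ on the principal regime, evaluated at $y=D_{r,\lambda}/(b^m-1)$, with $E_{r,\lambda}$ absorbing the transition between the exponential and linear pieces of $\phi$.

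For the worst-case error, I would combine the above with the decomposition from Section~\ref{sec:error}, so it remains to estimate $C_{\bsu}-1$. Using $1+t\le e^t$ and summing the inner geometric tail over $i\ge m+1$ gives
\begin{align*}
C_{\bsu}-1\le \exp\!\Biggl(\sum_{j=1}^{\infty}\sum_{h=1}^{d}\sum_{i=m+1}^{\infty}\frac{b-1}{b^{d(i-1)+h+a_j}}\Biggr)-1\lesssim b^{-dm},
\end{align*}
and the choice $d\ge m^{r/(r+1)}$ forces $b^{-dm}\le b^{-m^{(2r+1)/(r+1)}}$. This matches the decay rate $b^{-m^{1/\lambda}}$ of the $B_{\bsu}(\bsq,p)$ bound precisely at the boundary $1/\lambda=(2r+1)/(r+1)$, so for $\lambda>(r+1)/(2r+1)$ the two error contributions are comparable; their addition only replaces $D_{r,\lambda}$ by $D_{r,\lambda}+1$ in the denominator and inflates the prefactor by a constant $E'_{r,\lambda}$.

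The main technical obstacle is the uniform-in-$s$ control of the product $\prod_j(1+\Sigma_j)$ and the calibration of the interlacing factor $d$ against the exponent $1/\lambda$. The admissible range $\lambda>(r+1)/(2r+1)$ is dictated not by summability of $\sum_j\Sigma_j$ (which holds for all $\lambda>0$ under the weight hypothesis) but by requiring the $C_{\bsu}-1$ contribution, controlled by $b^{-dm}$ with $d=\lceil m^{r/(r+1)}\rceil$, not to dominate the $B_{\bsu}(\bsq,p)$ contribution that decays like $b^{-m^{1/\lambda}}$. Taking $\lambda\to((r+1)/(2r+1))^{+}$ then recovers Suzuki's exponent $p=(2r+1)/(r+1)$ from Theorem~\ref{thm:suzuki}, confirming that the construction is sharp at the boundary.
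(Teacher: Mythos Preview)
Your approach has a fundamental error stemming from a confusion about the exponent in $\phi$. The function in (\ref{eq:jensen_b_2})--(\ref{eq:jensen_b_geq_3}) is designed so that $\phi(b^{-\mu})=b^{-\mu^{\lambda}}$ for $\mu\ge 1$ (this is stated explicitly in Subsection~\ref{subsec:result} and used in the proof of Lemma~\ref{lem:sum_phi_bound}); the exponent $1/\lambda$ belongs to $\phi^{-1}$, which is why the final bound in the corollary carries $1/\lambda$. Since $0<\lambda\le 1$, the map $t\mapsto t^{\lambda}$ is \emph{sub}-additive, not super-additive: $(\sum_j \mu(a_j;k_j))^{\lambda}\le \sum_j \mu(a_j;k_j)^{\lambda}$, which yields
\[
\phi\bigl(b^{-\mu(\bsa;\bsk)}\bigr)=b^{-\mu(\bsa;\bsk)^{\lambda}}\ \ge\ \prod_{j=1}^{s}b^{-\mu(a_j;k_j)^{\lambda}}.
\]
The inequality points the wrong way for your product decomposition, so $\Sigma+1\le\prod_j(1+\Sigma_j)$ does not follow. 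Your argument is also internally inconsistent: you use the exponent $1/\lambda$ in $\phi$ for the factorization step but then quote $\phi^{-1}(y)=b^{-(\log_b(1/y))^{1/\lambda}}$, which is the inverse of the $\lambda$-version of $\phi$, not of the $1/\lambda$-version you started with.

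This is exactly why the paper does \emph{not} factorize over coordinates. Instead, Lemma~\ref{lem:sum_phi_bound} bounds $\Sigma$ by slicing over level sets of $\mu(\bsa;\cdot)$ and invoking a volume estimate from \cite{Suzuki15_2},
\[
\bigl|\{\bsk\ne\bszero:\mu'(\bsa;\bsk)<i\}\bigr|\ \le\ \exp\bigl(A_{a,r}\,i^{(r+1)/(2r+1)}\bigr),
\]
so that $\Sigma\lesssim\sum_{i\ge 1} \exp\bigl(A_{a,r}(i+c+1)^{(r+1)/(2r+1)}-i^{\lambda}\log b\bigr)$. This series converges precisely when $\lambda>(r+1)/(2r+1)$, and that is where the restriction on $\lambda$ actually originates---not, as you claim, from balancing $C_{\bsu}-1$ against $B_{\bsu}(\bsq,p)$. (The choice $d\ge m^{r/(r+1)}$ is calibrated so that $C_{\bsu}-1\lesssim b^{-m^{(2r+1)/(r+1)}}$ is never worse than the $B_{\bsu}$ bound over the admissible $\lambda$-range, but it is not the source of that range.) Your treatment of $C_{\bsu}-1$ via $\log(1+t)\le t$ and the geometric tail is fine and matches the paper; the missing ingredient is the counting argument for $\Sigma$.
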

\noindent 
The proof of this result is presented in Subsection~\ref{subsec:dependence}. 

This result means that we can construct a QMC rule which achieves a dimension-independent super-polynomial convergence of the worst-case error in $\Fcal_{s,\bsu}$ as \eqref{eq:acc_conv} with $1<p<(2r+1)/(r+1)$. This is a bit weaker than Theorem~\ref{thm:suzuki} (shown by Suzuki in \cite{Suzuki15_2}), since we do not have an error bound for the endpoint $p=(2r+1)/(r+1)$. Under an additional assumption, however, it is even possible to include the case $\lambda=(r+1)/(2r+1)$ in Corollary~\ref{cor:cbc_bound}, see Remark~\ref{rm:endpoint}. The most important advantage of our approach is that a good QMC rule can be explicitly constructed by using a CBC algorithm.

\section{The worst-case error in $\Fcal_{s,\bsu}$}\label{sec:error}
To analyze the worst-case error of interlaced polynomial lattice rules, we introduce a digit interlacing composition for non-negative integers. Let $d$ be an interlacing factor, and let $\bsk=(k_1,\ldots,k_d)\in \NN_0^d$ whose $b$-adic expansions are given by $k_j=\sum_{i=0}^{\infty}\kappa_{i,j} b^i$, which is actually a finite expansion. Then the digit interlacing function $\Ecal_d: \NN_0^d \to \NN_0$ is defined as
  \begin{align*}
    \Ecal_d(\bsk) := \sum_{i=0}^{\infty}\sum_{j=1}^{d}\kappa_{i,j}b^{di+j-1} .
  \end{align*}
It is obvious to show that $\Ecal_d$ is bijective. We also define such a function for $ds$-dimensional vectors $\bsk=(k_1,\ldots,k_{ds})\in \NN_0^{ds}$ by applying $\Ecal_d$ to every consecutive $d$ components, that is,
  \begin{align*}
    \Ecal_d(\bsk) := \left( \Ecal_d(k_1,\dots,k_d), \Ecal_d(k_{d+1},\dots,k_{2d}), \ldots, \Ecal_d(k_{d(s-1)+1},\dots,k_{ds})\right) .
  \end{align*}
The following lemma relates an interlaced polynomial lattice point set to numerical integration of Walsh functions, see \cite[Lemma~1]{Goda15} for the proof.

\begin{lemma}\label{lem:interlaced_Walsh}
Let $\Dcal_d(P(\bsq,p))=\{\bsy_0,\bsy_1,\dots,\bsy_{b^m-1}\}\subset [0,1)^s$ be an interlaced polynomial lattice point set of order $d$ with modulus $p\in \ZZ_b[x]$, $\deg(p)=m$, and generating vector $\bsq \in (\ZZ_b[x])^{ds}$. For $\bsk\in \NN_0^{ds}$, we have
  \begin{align*}
    \frac{1}{b^m}\sum_{n=0}^{b^m-1}\wal_{\Ecal_d(\bsk)}(\bsy_n)= \begin{cases}
     1 & \text{if $\bsk\in P^{\perp}(\bsq,p)$,} \\
     0 & \text{otherwise.} \\
     \end{cases}
  \end{align*}
\end{lemma}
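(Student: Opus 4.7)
The overall plan is to reduce the claim to Lemma~\ref{lem:dual_walsh} via the identity
\[
  \wal_{\Ecal_d(\bsk)}(\Dcal_d(\bsx)) = \wal_{\bsk}(\bsx) \qquad \text{for all } \bsk \in \NN_0^{ds},\ \bsx \in [0,1)^{ds}.
\]
Once this is established, writing $\bsy_n = \Dcal_d(\bsx_n)$ with $\bsx_n \in P(\bsq, p)$ gives
\[
  \frac{1}{b^m}\sum_{n=0}^{b^m-1}\wal_{\Ecal_d(\bsk)}(\bsy_n) = \frac{1}{b^m}\sum_{n=0}^{b^m-1}\wal_{\bsk}(\bsx_n),
\]
and the right-hand side equals the desired characteristic function of $P^{\perp}(\bsq, p)$ by Lemma~\ref{lem:dual_walsh}.

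First I would verify the identity on a single block of length $d$. Fix $\bsk = (k_1,\ldots,k_d) \in \NN_0^d$ with $b$-adic expansions $k_j = \sum_{i \ge 0}\kappa_{i,j}b^i$, and $\bsx = (x_1,\ldots,x_d) \in [0,1)^d$ with $x_j = \sum_{i \ge 1}\xi_{i,j}b^{-i}$. By Definitions~\ref{def:wal_1} and \ref{def:wal_s},
\[
  \wal_{\bsk}(\bsx) = \prod_{j=1}^d \omega_b^{\sum_{i \ge 0}\kappa_{i,j}\xi_{i+1,j}} = \omega_b^{\sum_{i \ge 0}\sum_{j=1}^d \kappa_{i,j}\xi_{i+1,j}}.
\]
On the other hand, by the definition of $\Ecal_d$ the coefficient of $b^{\ell}$ in $\Ecal_d(\bsk)$ with $\ell = di+j-1$ equals $\kappa_{i,j}$, and by the definition of $\Dcal_d$ the coefficient of $b^{-\ell'}$ in $\Dcal_d(\bsx)$ with $\ell' = d(i-1)+j$ equals $\xi_{i,j}$. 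Since Definition~\ref{def:wal_1} pairs the $b^{\ell}$-digit of the integer argument with the $b^{-(\ell+1)}$-digit of the real argument, setting $\ell' = \ell+1 = di+j$ (i.e.\ shifting $i \mapsto i+1$) recovers exactly $\xi_{i+1,j}$, so the exponent of $\omega_b$ in $\wal_{\Ecal_d(\bsk)}(\Dcal_d(\bsx))$ is the same double sum.

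Extending to $\bsk \in \NN_0^{ds}$ is immediate: both $\Ecal_d$ and $\Dcal_d$ act blockwise on consecutive $d$-tuples, and $s$-dimensional Walsh functions factorize over coordinates, so one takes the product of the one-block identity over the $s$ blocks. The main (and really only) obstacle is careful bookkeeping of digit indices, since the Walsh pairing between integer and real-number digits is offset by one while $\Ecal_d$ and $\Dcal_d$ use differently shifted index conventions; once this is untangled, the conclusion follows at once from Lemma~\ref{lem:dual_walsh}.
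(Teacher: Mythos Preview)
Your proof is correct. The paper does not give its own argument here but defers to \cite[Lemma~1]{Goda15}; your approach---establishing the digit-pairing identity $\wal_{\Ecal_d(\bsk)}(\Dcal_d(\bsx))=\wal_{\bsk}(\bsx)$ and then invoking Lemma~\ref{lem:dual_walsh}---is the natural route and matches what the cited reference does.
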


We introduce another function $\tilde{\mu}(a,h;\cdot): \NN_0\to \RR$ for a real number $a$ and an integer $h\in \{1,2,\dots, d\}$. For $k\in \NN$, we denote its $b$-adic expansion by $k = \kappa_1 b^{c_1-1}+\kappa_2 b^{c_2-1}+\cdots +\kappa_v b^{c_v-1}$ such that $\kappa_1,\dots,\kappa_v \in \{1,2,\dots,b-1\}$ and $c_1>\ldots > c_v >0$. The function $\tilde{\mu}(a,h;\cdot): \NN_0\to \RR$ is defined as 
  \begin{align*}
    \tilde{\mu}(a,h;k) := \sum_{i=1}^{v}\left[ d(c_i-1)+h+a\right] ,
  \end{align*}
and $\tilde{\mu}(a,h;0):=0$. For vectors of real numbers $\bsa$ and $\bsk\in \NN_0^{ds}$, we define
  \begin{align*}
    \tilde{\mu}(\bsa;\bsk) := \sum_{j=1}^{s}\sum_{h=1}^{d}\tilde{\mu}(a_j,h; k_{d(j-1)+h}) .
  \end{align*}
With a slight abuse of notation, for $u\subset \{1,2,\ldots,ds\}$ and $\bsk_u\in \NN_0^s$, we write $\tilde{\mu}(\bsa;\bsk_u):= \tilde{\mu}(\bsa;(\bsk_u,\bszero))$, where the vector $(\bsk_u,\bszero)$ denotes the $ds$-dimensional vector whose $j$-th component is $k_j$ for $j\in u$ and $0$ otherwise. From Definition~\ref{def:weight_s} and the definition of $\Ecal_d$, we have 
\begin{align}\label{eq:metric_identity}
   \mu(\bsa;\Ecal_d(\bsk)) = \tilde{\mu}(\bsa;\bsk) .
\end{align}

Now the worst-case error for numerical integration in $\Fcal_{s,\bsu}$ using an interlaced polynomial lattice rule is given as follows.

\begin{proposition}\label{prop:worst-case_error_interlaced}
Let $\Dcal_d(P(\bsq,p))\subset [0,1)^s$ be an interlaced polynomial lattice point set of order $d$ with modulus $p\in \ZZ_b[x]$, $\deg(p)=m$, and generating vector $\bsq \in (\ZZ_b[x])^{ds}$. For a sequence of the weights $\bsu$, we have
\begin{align*}
  e^{\wor}(\Fcal_{s,\bsu};\Dcal_d(P(\bsq,p))) \le \sum_{\bsk\in P^{\perp}(\bsq,p)\setminus \{\bszero\}}b^{-\tilde{\mu}(\bsa;\bsk)} ,
\end{align*}
where $P^{\perp}(\bsq,p)$ is the dual polynomial lattice of $P(\bsq,p)$, and $\bsa$ is a sequence of real numbers given as in Proposition~\ref{prop:walsh_decay}.
\end{proposition}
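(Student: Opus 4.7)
The plan is to bound the integration error by expanding the integrand into its Walsh series, invoking the Walsh-coefficient decay from Proposition~\ref{prop:walsh_decay}, and then using Lemma~\ref{lem:interlaced_Walsh} to identify exactly which Walsh coefficients contribute to the error.

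First, I would fix $f\in \Fcal_{s,\bsu}$ with $\lVert f\rVert_{\Fcal_{s,\bsu}}\le 1$. Since, as noted after Definition~\ref{def:wal_s}, the Walsh series of any such $f$ converges to $f$ pointwise absolutely, I may write
\begin{align*}
   I(f;\Dcal_d(P(\bsq,p)))-I(f) \;=\; \sum_{\bsk\in \NN_0^s\setminus\{\bszero\}}\hat{f}(\bsk)\,\frac{1}{b^m}\sum_{n=0}^{b^m-1}\wal_{\bsk}(\bsy_n),
\end{align*}
where the $\bszero$ term cancels the integral $I(f)=\hat{f}(\bszero)$.

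Next, the key step is to reindex the sum via the digit interlacing bijection $\Ecal_d:\NN_0^{ds}\to \NN_0^s$ (which is bijective on each block of $d$ coordinates, hence globally). Every $\bsk\in \NN_0^s$ is uniquely of the form $\Ecal_d(\bsl)$ for some $\bsl\in \NN_0^{ds}$, and by Lemma~\ref{lem:interlaced_Walsh} the inner average equals $1$ if $\bsl\in P^{\perp}(\bsq,p)$ and $0$ otherwise. Hence
\begin{align*}
   I(f;\Dcal_d(P(\bsq,p)))-I(f) \;=\; \sum_{\bsl\in P^{\perp}(\bsq,p)\setminus\{\bszero\}}\hat{f}(\Ecal_d(\bsl)).
\end{align*}

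Taking absolute values, applying the triangle inequality, and invoking Proposition~\ref{prop:walsh_decay} gives $|\hat{f}(\Ecal_d(\bsl))|\le b^{-\mu(\bsa;\Ecal_d(\bsl))}$, which by identity~\eqref{eq:metric_identity} equals $b^{-\tilde{\mu}(\bsa;\bsl)}$. Taking the supremum over the unit ball of $\Fcal_{s,\bsu}$ yields the claimed bound. The only mild subtlety, and the one I would be most careful about, is justifying that the reordering of the (absolutely convergent) Walsh series is legitimate, together with the trivial but necessary check that $\Ecal_d(\bsl)=\bszero$ precisely when $\bsl=\bszero$, so that removing the zero index on both sides of the reindexing is consistent.
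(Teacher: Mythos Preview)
Your argument is correct and follows essentially the same route as the paper's proof: expand $f$ in its Walsh series, reindex via the bijection $\Ecal_d$, apply Lemma~\ref{lem:interlaced_Walsh} to reduce the error to a sum of Walsh coefficients over $P^{\perp}(\bsq,p)\setminus\{\bszero\}$, then bound via Proposition~\ref{prop:walsh_decay} and the identity~\eqref{eq:metric_identity}. The only cosmetic difference is the order in which the supremum and the triangle inequality are taken; the paper carries the supremum through each step rather than fixing $f$ first, but the content is identical.
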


\begin{proof}
We write $\Dcal_d(P(\bsq,p))=\{\bsy_0,\bsy_1,\dots,\bsy_{b^m-1}\}$. Let us consider a function $f\in \Fcal_{s,\bsu}$. Given the Walsh series expansion of $f$ and the fact that $\Ecal_d$ is bijective, the signed integration error becomes
\begin{align*}
I(f;\Dcal_d(P(\bsq,p)))-I(f) & = \frac{1}{b^m}\sum_{n=0}^{b^m-1}f(\bsy_n)- \hat{f}(\bszero) \\
& = \frac{1}{b^m}\sum_{n=0}^{b^m-1}\sum_{\bsk\in \NN_0^{ds}}\hat{f}(\Ecal_d(\bsk))\wal_{\Ecal_d(\bsk)}(\bsy_n) - \hat{f}(\bszero) \\
& = \sum_{\bsk\in \NN_0^{ds}}\hat{f}(\Ecal_d(\bsk))\frac{1}{b^m}\sum_{n=0}^{b^m-1}\wal_{\Ecal_d(\bsk)}(\bsy_n) - \hat{f}(\bszero) \\
& = \sum_{\bsk\in P^{\perp}(\bsq,p)\setminus \{\bszero\}}\hat{f}(\Ecal_d(\bsk)) ,
\end{align*}
where we use Lemma~\ref{lem:interlaced_Walsh} in the last equality. Then we obtain
\begin{align*}
e^{\wor}(\Fcal_{s,\bsu};\Dcal_d(P(\bsq,p))) & = \sup_{\substack{f\in \Fcal_{s,\bsu}\\ \lVert f\rVert_{\Fcal_{s,\bsu}} \le 1}}\left| \sum_{\bsk\in P^{\perp}(\bsq,p)\setminus \{\bszero\}}\hat{f}(\Ecal_d(\bsk))\right| \nonumber \\
& \le \sup_{\substack{f\in \Fcal_{s,\bsu}\\ \lVert f\rVert_{\Fcal_{s,\bsu}} \le 1}}\sum_{\bsk\in P^{\perp}(\bsq,p)\setminus \{\bszero\}}\left|\hat{f}(\Ecal_d(\bsk))\right| \nonumber \\
& \le \sup_{\substack{f\in \Fcal_{s,\bsu}\\ \lVert f\rVert_{\Fcal_{s,\bsu}} \le 1}}\lVert f\rVert_{\Fcal_{s,\bsu}}\sum_{\bsk\in P^{\perp}(\bsq,p)\setminus \{\bszero\}}b^{-\mu(\bsa;\Ecal_d(\bsk))} \nonumber \\
& = \sum_{\bsk\in P^{\perp}(\bsq,p)\setminus \{\bszero\}}b^{-\tilde{\mu}(\bsa;\bsk)} ,
\end{align*}
where we use the triangle inequality, Proposition~\ref{prop:walsh_decay} and the identity (\ref{eq:metric_identity}) in the first inequality, the second inequality and the last equality, respectively.
\end{proof}

Since the error bound in Proposition~\ref{prop:worst-case_error_interlaced} is independent of a particular function $f$, it can be used as a quality criterion for the construction of interlaced polynomial lattice rules. The following proposition gives a concise formula for the bound  on $e^{\wor}(\Fcal_{s,\bsu};\Dcal_d(P(\bsq,p)))$ in Proposition~\ref{prop:worst-case_error_interlaced}.

\begin{proposition}\label{prop:worst-case_error_interlaced2}
Let $P(\bsq,p)=\{\bsx_0,\bsx_1,\dots,\bsx_{b^m-1}\}\subset [0,1)^{ds}$ be a polynomial lattice point set with modulus $p\in \ZZ_b[x]$, $\deg(p)=m$, and generating vector $\bsq \in (\ZZ_b[x])^{ds}$. Let $\Dcal_d(P(\bsq,p))$ be its interlaced polynomial lattice point set. We denote the $b$-adic expansion of $x_{n,j}$ by $x_{n,j}=\sum_{i=1}^{\infty}\xi_{i,n,j} b^{-i}$ for $0\le n<b^m$ and $1\le j\le ds$. For a sequence of real numbers $\bsa$, we have
\begin{align*}
  \sum_{\bsk\in P^{\perp}(\bsq,p)\setminus \{\bszero\}}b^{-\tilde{\mu}(\bsa;\bsk)} = -1+\frac{1}{b^m}\sum_{n=0}^{b^m-1}\prod_{j=1}^{s}\prod_{h=1}^{d}\prod_{i=1}^{\infty}\left\{ 1+\frac{\eta(\xi_{i,n,d(j-1)+h})}{b^{d(i-1)+h+a_j}}\right\} ,
\end{align*}
where $\eta: \{0,1,\dots,b-1\}\to \RR$ is defined as
\begin{align*}
  \eta(\xi) := \begin{cases}
  b-1 & \text{if $\xi=0$,} \\
  -1  & \text{otherwise.}
  \end{cases}
\end{align*}
\end{proposition}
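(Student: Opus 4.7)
The plan is to rewrite the left-hand side as an average over the point set via Lemma~\ref{lem:interlaced_Walsh}, then exploit the tensor structure of $\tilde\mu(\bsa;\cdot)$ together with a factorization of the Walsh function on an interlaced point to get a product formula; finally, an ``Euler product'' over the $b$-adic digits will collapse each one-dimensional sum to the advertised form.

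\medskip
\noindent\emph{Step 1: turn the dual-lattice sum into an average.} Using Lemma~\ref{lem:interlaced_Walsh} to encode the indicator of $P^{\perp}(\bsq,p)$, I would write
\begin{align*}
\sum_{\bsk\in P^{\perp}(\bsq,p)} b^{-\tilde\mu(\bsa;\bsk)}
= \sum_{\bsk\in\NN_0^{ds}} b^{-\tilde\mu(\bsa;\bsk)}\,\frac{1}{b^m}\sum_{n=0}^{b^m-1}\wal_{\Ecal_d(\bsk)}(\bsy_n),
\end{align*}
and then swap the sums. This swap is justified because $|\wal_{\Ecal_d(\bsk)}(\bsy_n)|=1$ and $\sum_{\bsk}b^{-\tilde\mu(\bsa;\bsk)}<\infty$ provided $a_j>0$ for all $j$ (which one should state as a working hypothesis in the proof, consistent with the setup of the paper).

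\medskip
\noindent\emph{Step 2: factorize the Walsh function on $\Dcal_d$-interlaced points.} The key identity I would prove by digit-bookkeeping is
\begin{align*}
\wal_{\Ecal_d(k_1,\dots,k_d)}\!\bigl(\Dcal_d(x_1,\dots,x_d)\bigr) = \prod_{l=1}^{d} \wal_{k_l}(x_l).
\end{align*}
To see this, write the $b$-adic expansions $k_l=\sum_{i\ge 0}\kappa_{i,l}b^i$ and $x_l=\sum_{i\ge 1}\xi_{i,l}b^{-i}$. By the definitions of $\Ecal_d$ and $\Dcal_d$, the digit at position $di+l-1$ of $\Ecal_d(k_1,\dots,k_d)$ is $\kappa_{i,l}$ and the digit at position $di+l$ of $\Dcal_d(x_1,\dots,x_d)$ is $\xi_{i+1,l}$, so that Definition~\ref{def:wal_1} gives $\omega_b$ to the exponent $\sum_{i,l}\kappa_{i,l}\xi_{i+1,l}$, matching the product on the right. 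Applied componentwise this yields
\begin{align*}
\wal_{\Ecal_d(\bsk)}(\bsy_n) = \prod_{j=1}^{s}\prod_{h=1}^{d} \wal_{k_{d(j-1)+h}}\bigl(x_{n,d(j-1)+h}\bigr).
\end{align*}
Combining with the additive decomposition $\tilde\mu(\bsa;\bsk)=\sum_{j=1}^s\sum_{h=1}^d\tilde\mu(a_j,h;k_{d(j-1)+h})$ separates the sum over $\bsk\in\NN_0^{ds}$ into a product of one-dimensional sums.

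\medskip
\noindent\emph{Step 3: evaluate each one-dimensional sum by an Euler-type product.} It remains to show, for fixed $a\in\RR$, $h\in\{1,\dots,d\}$ and $x=\sum_i\xi_i b^{-i}\in[0,1)$, that
\begin{align*}
\sum_{k\in\NN_0} b^{-\tilde\mu(a,h;k)} \wal_k(x) \;=\; \prod_{c=1}^{\infty}\left(1+\frac{\eta(\xi_c)}{b^{d(c-1)+h+a}}\right).
\end{align*}
Every $k\ge 1$ is uniquely parametrized by choosing a finite nonempty set of positions $c_1>\cdots>c_v\ge 1$ and nonzero digits $\kappa_1,\dots,\kappa_v\in\{1,\dots,b-1\}$; this turns the left-hand side into a product over $c\ge 1$ of factors $1+\sum_{\kappa=1}^{b-1}b^{-d(c-1)-h-a}\omega_b^{\kappa\xi_c}$. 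The inner character sum equals $b-1$ if $\xi_c=0$ and $-1$ otherwise (by the geometric-sum identity $\sum_{\kappa=0}^{b-1}\omega_b^{\kappa\xi_c}=0$ when $\xi_c\ne0$), which is exactly $\eta(\xi_c)$.

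\medskip
\noindent\emph{Step 4: assemble and isolate the $\bszero$ term.} Substituting Step~3 into the product obtained from Step~2 gives the inner expression of the proposition for each $n$; averaging over $n$ gives $\sum_{\bsk\in P^{\perp}(\bsq,p)} b^{-\tilde\mu(\bsa;\bsk)}$, and subtracting the $\bsk=\bszero$ contribution (which equals $1$ since $\tilde\mu(\bsa;\bszero)=0$) produces the right-hand side. The main obstacle is the bookkeeping in Step~2, i.e., verifying that the digit positions of $\Ecal_d(\bsk)$ and $\Dcal_d(\bsx)$ align correctly so that the inner-product exponent separates as a tensor product; the rest is a routine character computation.
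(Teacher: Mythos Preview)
Your proof is correct and follows essentially the same Euler-product computation as the paper. The one structural difference is that you start from Lemma~\ref{lem:interlaced_Walsh} (Walsh functions on the interlaced points $\bsy_n$) and then spend Step~2 proving the factorization $\wal_{\Ecal_d(\bsk)}(\Dcal_d(\bsx))=\prod_{l}\wal_{k_l}(x_l)$ to pass back to the underlying $ds$-dimensional coordinates, whereas the paper bypasses this detour by invoking Lemma~\ref{lem:dual_walsh} directly, which already expresses the dual-lattice indicator via $\wal_{\bsk}(\bsx_n)$ on the uninterlaced points. Your Step~2 identity is precisely what makes Lemma~\ref{lem:interlaced_Walsh} a consequence of Lemma~\ref{lem:dual_walsh}, so the two routes are logically equivalent; the paper's path is just shorter. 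One small remark: your added hypothesis $a_j>0$ in Step~1 is unnecessary, since $\sum_{k\ge 0} b^{-\tilde\mu(a,h;k)}=\prod_{c\ge 1}\bigl(1+(b-1)b^{-d(c-1)-h-a}\bigr)$ converges for every real $a$ (the tail is geometric in $c$), and the proposition is stated for arbitrary real $\bsa$. The paper handles convergence by truncating to $k<b^M$ and letting $M\to\infty$, which avoids the issue altogether.
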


\begin{proof}
Using Lemma~\ref{lem:dual_walsh}, we have
\begin{align*}
& \sum_{\bsk\in P^{\perp}(\bsq,p)\setminus \{\bszero\}}b^{-\tilde{\mu}(\bsa;\bsk)} \\
& = \sum_{\bsk\in \NN_0^{ds}\setminus \{\bszero\}}b^{-\tilde{\mu}(\bsa;\bsk)}\frac{1}{b^m}\sum_{n=0}^{b^m-1}\wal_{\bsk}(\bsx_n) \\
& = -1+\frac{1}{b^m}\sum_{n=0}^{b^m-1}\sum_{\bsk\in \NN_0^{ds}}b^{-\tilde{\mu}(\bsa;\bsk)}\wal_{\bsk}(\bsx_n) \\
& = -1+\frac{1}{b^m}\sum_{n=0}^{b^m-1}\prod_{j=1}^{s}\prod_{h=1}^{d}\sum_{k_{d(j-1)+h}=0}^{\infty}b^{-\tilde{\mu}(a_j,h;k_{d(j-1)+h})}\wal_{k_{d(j-1)+h}}(x_{n,d(j-1)+h}) .
\end{align*}

Let us consider the function $\psi_{a,h,M}:[0,1)\to \RR$, for a real number $a$ and $h,M\in \NN$ with $1\le h\le d$, given by
\begin{align*}
\psi_{a,h,M}(x) = \sum_{k=0}^{b^M-1}b^{-\tilde{\mu}(a,h;k)}\wal_{k}(x) ,
\end{align*}
for $x\in [0,1)$. Denoting the unique $b$-adic expansion of $x\in [0,1)$ by $x=\sum_{i=1}^{\infty}\xi_{i} b^{-i}$, and denoting also the $b$-adic expansion of $k\in \NN_0$, $0\le k<b^M$, by $k=\sum_{i=0}^{M-1}\kappa_{i} b^{i}$, we have
\begin{align*}
\tilde{\mu}(a,h;\kappa_0+\kappa_1 b+\dots+\kappa_{M-1}b^{M-1}) = \sum_{i=1}^{M}\left[ d(i-1)+h+a\right]\chi(\kappa_{i-1}\ne 0) ,
\end{align*}
where $\chi$ denotes the indicator function, and
\begin{align*}
\wal_{k}(x) = \prod_{i=1}^{M}\omega_b^{\kappa_{i-1} \xi_i} .
\end{align*}
Then we have
\begin{align*}
\psi_{a,h,M}(x) & = \sum_{\kappa_0=0}^{b-1}\sum_{\kappa_1=0}^{b-1}\cdots \sum_{\kappa_{M-1}=0}^{b-1}\prod_{i=1}^{M}b^{-\left[ d(i-1)+h+a\right]\chi(\kappa_{i-1}\ne 0)}\omega_b^{\kappa_{i-1} \xi_i} \\
& = \prod_{i=1}^{M}\sum_{\kappa_{i-1}=0}^{b-1}b^{-\left[ d(i-1)+h+a\right]\chi(\kappa_{i-1}\ne 0)}\omega_b^{\kappa_{i-1} \xi_i} \\
& = \prod_{i=1}^{M}\left\{ 1+\frac{\eta(\xi_i)}{b^{d(i-1)+h+a}}\right\} .
\end{align*}
By letting $M$ go to $\infty$ we obtain that
\begin{align*}
\sum_{k=0}^{\infty}b^{-\tilde{\mu}(a,h;k)}\wal_{k}(x) = \lim_{M\to \infty}\psi_{a,h,M}(x) = \prod_{i=1}^{\infty}\left\{ 1+\frac{\eta(\xi_i)}{b^{d(i-1)+h+a}}\right\} ,
\end{align*}
which converges pointwise absolutely. Hence the result follows.
\end{proof}

In order to evaluate the bound on $e^{\wor}(\Fcal_{s,\bsu};\Dcal_d(P(\bsq,p)))$ as shown in Proposition~\ref{prop:worst-case_error_interlaced2}, one needs to compute an infinite product. This infeasible computation can be avoided in the following way. As already stated in Subsection~\ref{subsec:result}, we have $\xi_{m+1,n,j}=\xi_{m+2,n,j}=\cdots =0$ for any $0\le n<b^m$ and $1\le j\le ds$. By setting
\begin{align*}
  C_{\bsu}=\prod_{j=1}^{s}\prod_{h=1}^{d}\prod_{i=m+1}^{\infty}\left\{ 1+\frac{b-1}{b^{d(i-1)+h+a_j}}\right\} ,
\end{align*}
we have
\begin{align}\label{bound_e_B}
  & \quad e^{\wor}(\Fcal_{s,\bsu};\Dcal_d(P(\bsq,p))) \nonumber \\
  & \le -1+\frac{C_{\bsu}}{b^m}\sum_{n=0}^{b^m-1}\prod_{j=1}^{s}\prod_{h=1}^{d}\prod_{i=1}^{m}\left\{ 1+\frac{\eta(\xi_{i,n,d(j-1)+h})}{b^{d(i-1)+h+a_j}}\right\} \nonumber \\
  & = C_{\bsu}-1+C_{\bsu}\left[ -1+\frac{1}{b^m}\sum_{n=0}^{b^m-1}\prod_{j=1}^{s}\prod_{h=1}^{d}\prod_{i=1}^{m}\left\{ 1+\frac{\eta(\xi_{i,n,d(j-1)+h})}{b^{d(i-1)+h+a_j}}\right\}\right] \nonumber \\
  & =: C_{\bsu}-1+C_{\bsu}B_{\bsu}(\bsq,p).
\end{align}
In \eqref{bound_Cu} below we show that $C_{\bsu}-1 \le E b^{-dm}$ for some constant $E > 0$, so that the main term in \eqref{bound_e_B} is $B_{\bsu}(\bsq, p)$. Since an infinite product does not appear in $B_{\bsu}(\bsq,p)$, we can use $B_{\bsu}(\bsq,p)$ as a quality criterion for searching for good generating vectors $\bsq$ instead of $e^{\wor}(\Fcal_{s,\bsu};\Dcal_d(P(\bsq,p)))$. Note that $B_{\bsu}(\bsq,p)$ can be also expressed as
\begin{align*}
  B_{\bsu}(\bsq,p) = \sum_{\substack{\bsk\in P^{\perp}(\bsq,p)\setminus \{\bszero\}\\ k_j<b^m, \forall j}}b^{-\tilde{\mu}(\bsa;\bsk)} .
\end{align*}

\section{Component-by-component algorithm}\label{sec:cbc}
\subsection{Error bounds for the algorithm}\label{subsec:cbc}
In the proof of Theorem~\ref{thm:cbc_bound} and its subsequent analysis, we use Jensen's inequality \eqref{eq:jensen} for a concave and unbounded monotonic increasing function $\phi:[0,\infty)\to [0,\infty)$.

As mentioned in Subsection~\ref{subsec:result}, for arbitrary $1\le \tau\le ds$ , we define $B_{\bsu}(\bsq_{\tau},p)$ as in (\ref{eq:B_u_tau}), which can be also expressed as
\begin{align*}
  B_{\bsu}(\bsq_{\tau},p) = \sum_{\substack{\bsk\in P^{\perp}(\bsq_{\tau},p)\setminus \{\bszero\}\\ k_j<b^m, \forall j}}b^{-\tilde{\mu}(\bsa;\bsk)} .
\end{align*}
In order to prove Theorem~\ref{thm:cbc_bound}, we first show the following proposition.
\begin{proposition}\label{prop:cbc_bound}
Let $b$ be a prime and $p\in \ZZ_b[x]$ be irreducible with $\deg(p)=m$. Let $\phi:[0,\infty)\to [0,\infty)$ be a concave and unbounded monotonic increasing function. Suppose that $\bsq=(q_1,\ldots, q_{ds})$ is constructed using Algorithm \ref{algorithm:cbc}. Then, for any $\tau=1,\dots,ds$, we have
\begin{align*}
  B_{\bsu}(\bsq_{\tau},p) \le \phi^{-1}\left[ \frac{1}{b^m-1}\sum_{\substack{\bsk\in \NN_0^\tau\setminus \{\bszero\}\\ k_j<b^{m}, \forall j}}\phi\left(b^{-\tilde{\mu}(\bsa;\bsk)}\right)\right] .
\end{align*}
\end{proposition}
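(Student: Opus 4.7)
The plan is induction on $\tau$. The base case $\tau=1$ is immediate: with $q_1 = 1$ and $p \in \ZZ_b[x]$ irreducible of degree $m$, the dual-lattice condition $\mathrm{tr}_m(k_1) \equiv 0 \pmod{p}$ forces $\mathrm{tr}_m(k_1) = 0$ by a degree count, so $P^{\perp}((1),p)$ meets $\{1,\dots,b^m-1\}$ trivially, $B_{\bsu}(\bsq_1,p) = 0$, and the claimed right-hand side is non-negative, so the bound holds.

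The key structural observation I would isolate before the inductive step is the $q$-independent/$q$-dependent additive split
\[
B_{\bsu}((\bsq_{\tau-1},q),p) \;=\; B_{\bsu}(\bsq_{\tau-1},p) \;+\; \theta(q),
\]
where $\theta(q)$ is the sum of $b^{-\tilde{\mu}(\bsa;\bsk)}$ over $\bsk \in P^{\perp}((\bsq_{\tau-1},q),p)$ with $k_\tau \ne 0$ and $k_j < b^m$ for all $j$. This comes from splitting the defining sum of $B_{\bsu}((\bsq_{\tau-1},q),p)$ according to whether $k_\tau=0$: in that case the dual-lattice condition on $\bsk=(\bsk',0)$ decouples from $q$ and, via the convention $\tilde{\mu}(\bsa;(\bsk',\bszero)) = \tilde{\mu}(\bsa;\bsk')$, reproduces $B_{\bsu}(\bsq_{\tau-1},p)$ exactly; the remaining contribution is $\theta(q)$.

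Because $q_\tau$ minimizes $B_{\bsu}((\bsq_{\tau-1},q),p)$ over $R_{b,m}$ and $\phi$ is monotonic increasing,
\[
\phi\bigl(B_{\bsu}(\bsq_\tau,p)\bigr) = \min_{q \in R_{b,m}} \phi\bigl(B_{\bsu}((\bsq_{\tau-1},q),p)\bigr) \le \frac{1}{b^m-1} \sum_{q \in R_{b,m}} \phi\bigl(B_{\bsu}((\bsq_{\tau-1},q),p)\bigr).
\]
I then use the additive split together with subadditivity of $\phi$ (the concave form of Jensen's inequality \eqref{eq:jensen} recorded in the excerpt) to get $\phi(B_{\bsu}((\bsq_{\tau-1},q),p)) \le \phi(B_{\bsu}(\bsq_{\tau-1},p)) + \phi(\theta(q))$, and a further application of subadditivity to $\phi(\theta(q))$. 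In the resulting $q$-average, the crucial counting step is that for any fixed $\bsk$ with $k_\tau \ne 0$ and $k_j < b^m$, the polynomial $k_\tau(x)$ is a unit modulo the irreducible $p(x)$, so the linear equation $\sum_{j<\tau} k_j(x) q_j(x) + k_\tau(x) q(x) \equiv 0 \pmod{p(x)}$ has a unique solution in $\ZZ_b[x]/(p(x))$ and hence $|\{q \in R_{b,m} : \bsk \in P^{\perp}((\bsq_{\tau-1},q),p)\}| \le 1$.

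Chaining these bounds gives
\[
\phi\bigl(B_{\bsu}(\bsq_\tau,p)\bigr) \le \phi\bigl(B_{\bsu}(\bsq_{\tau-1},p)\bigr) + \frac{1}{b^m-1}\sum_{\substack{\bsk \in \NN_0^\tau\setminus\{\bszero\}\\ k_\tau \ne 0,\; k_j<b^m\,\forall j}} \phi\bigl(b^{-\tilde{\mu}(\bsa;\bsk)}\bigr),
\]
and the inductive hypothesis on the first term together with the disjoint decomposition of the index set into the ``$k_\tau = 0$'' and ``$k_\tau \ne 0$'' pieces (using once more $\tilde{\mu}(\bsa;(\bsk',0)) = \tilde{\mu}(\bsa;\bsk')$) assembles both contributions into $\frac{1}{b^m-1}\sum_{\bsk} \phi(b^{-\tilde{\mu}(\bsa;\bsk)})$ summed over $\bsk \in \NN_0^\tau \setminus \{\bszero\}$ with $k_j < b^m$, as required; applying $\phi^{-1}$ closes the induction. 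The only real subtlety is the order of operations: if one applies subadditivity of $\phi$ to the entire $B_{\bsu}((\bsq_{\tau-1},q),p)$ before peeling off the $q$-independent piece $B_{\bsu}(\bsq_{\tau-1},p)$, the $k_\tau = 0$ terms get counted $b^m-1$ times in the subsequent average over $q$ and the target bound is off by a factor of $b^m-1$ on exactly those terms.
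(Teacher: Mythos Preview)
Your proof is correct and follows essentially the same approach as the paper's: induction on $\tau$, the additive split $B_{\bsu}((\bsq_{\tau-1},q),p)=B_{\bsu}(\bsq_{\tau-1},p)+\theta(q)$, an averaging argument over $q\in R_{b,m}$ combined with subadditivity of $\phi$, and the counting observation that for each $\bsk$ with $0<k_\tau<b^m$ there is at most one $q\in R_{b,m}$ solving the dual-lattice congruence. The only cosmetic difference is ordering: the paper first notes that minimizing $B_{\bsu}$ over $q$ is equivalent to minimizing $\theta$, averages $\phi(\theta(q))$ directly, and then applies subadditivity once more at the end to recombine $\phi(B_{\bsu}(\bsq_\tau,p))+\phi(\theta(\bsq_{\tau+1}))$; you instead average $\phi(B_{\bsu})$ and then split via subadditivity inside the average, which yields the same bound since the $q$-independent term passes through the average unchanged.
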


\begin{proof}
We prove the proposition by induction on $\tau$. Let us consider the case $\tau=1$ first. Since $p\in \ZZ_b[x]$ is irreducible with $\deg(p)=m$ and $q_1=1$, we have $P^{\perp}(1,p)=\{b^m k: k\in \NN_0\}$, which implies
\begin{align*}
B_{\bsu}(1,p) = \sum_{\substack{k \in P^{\perp}(1,p)\setminus \{0\}\\ k <b^m}}b^{-\tilde{\mu}(a_1,1;k)}=0.
\end{align*}
Thus we have
\begin{align*}
B_{\bsu}(1,p) \le \phi^{-1}\left[ \frac{1}{b^m-1}\sum_{0< k <b^m}\phi\left( b^{-\tilde{\mu}(a_1,1;k)}\right)\right].
\end{align*}

Suppose that for some integer $1\le \tau<ds$ we have already obtained $\bsq_{\tau}\in (R_{b,m})^{\tau}$ such that
\begin{align*}
  B_{\bsu}(\bsq_{\tau},p) \le \phi^{-1}\left[ \frac{1}{b^m-1}\sum_{\substack{\bsk\in \NN_0^\tau\setminus \{\bszero\}\\ k_j<b^{m}, \forall j}}\phi\left(b^{-\tilde{\mu}(\bsa;\bsk)}\right)\right] .
\end{align*}
Now we consider a $q\in R_{b,m}$. We have
\begin{align*}
  B_{\bsu}((\bsq_{\tau},q),p) & = \sum_{\substack{\bsk\in P^{\perp}((\bsq_{\tau},q),p)\setminus \{\bszero\}\\ k_j<b^m, \forall j}}b^{-\tilde{\mu}(\bsa;\bsk)} \\
  & = \sum_{\substack{\bsk\in P^{\perp}((\bsq_{\tau},q),p)\setminus \{\bszero\}\\ \text{$k_j<b^m$ for $1\le j\le \tau$}\\ k_{\tau+1}=0}}b^{-\tilde{\mu}(\bsa;\bsk)} + \sum_{\substack{\bsk\in P^{\perp}((\bsq_{\tau},q),p)\setminus \{\bszero\}\\ \text{$k_j<b^m$ for $1\le j\le \tau$}\\ 0<k_{\tau+1}<b^m}}b^{-\tilde{\mu}(\bsa;\bsk)} \\
  & = B_{\bsu}(\bsq_{\tau},p)+\theta_{\bsu}((\bsq_{\tau},q),p) ,
\end{align*}
where we write
\begin{align*}
\theta_{\bsu}((\bsq_{\tau},q),p) := \sum_{\substack{\bsk\in P^{\perp}((\bsq_{\tau},q),p)\setminus \{\bszero\}\\ \text{$k_j<b^m$ for $1\le j\le \tau$}\\ 0<k_{\tau+1}<b^m}}b^{-\tilde{\mu}(\bsa;\bsk)}.
\end{align*}
To find $q_{\tau+1}\in R_{b,m}$ which minimizes $B_{\bsu}((\bsq_{\tau},q),p)$ as a function of $q$, we only need to consider the term $\theta_{\bsu}((\bsq_{\tau},q),p)$. Using an averaging argument and Jensen's inequality \eqref{eq:jensen} for a concave and unbounded monotonic increasing function $\phi:[0,\infty)\to [0,\infty)$, we have
\begin{align*}
\phi\left( \theta_{\bsu}(\bsq_{\tau+1},p)\right) & \le \frac{1}{b^m-1}\sum_{q\in R_{b,m}} \phi\left( \theta_{\bsu}((\bsq_{\tau},q),p)\right) \\
& \le \frac{1}{b^m-1}\sum_{q\in R_{b,m}} \sum_{\substack{\bsk\in P^{\perp}((\bsq_{\tau},q),p)\setminus \{\bszero\}\\ \text{$k_j<b^m$ for $1\le j\le \tau$}\\ 0<k_{\tau+1}<b^m}} \phi\left( b^{-\tilde{\mu}(\bsa;\bsk)} \right)\\
& = \sum_{\substack{\bsk\in \NN_0^{\tau+1}\setminus \{\bszero\}\\ \text{$k_j<b^m$ for $1\le j\le \tau$}\\ 0<k_{\tau+1}<b^m}}\frac{\phi\left( b^{-\tilde{\mu}(\bsa;\bsk)} \right)}{b^m-1}\sum_{\substack{q\in R_{b,m}\\ \rtr_m(\bsk)\cdot (\bsq_{\tau},q)\equiv 0 \pmod p}}1 .
\end{align*}
Since $k_{\tau+1}$ cannot be a multiple of $b^m$, the inner sum in the last expression equals $0$ if $(k_1,\dots,k_{\tau})\in P^{\perp}(\bsq_{\tau},p)$, and equals $1$ otherwise. Through this argument, we obtain
\begin{align*}
\phi\left( \theta_{\bsu}(\bsq_{\tau+1},p)\right) & \le \frac{1}{b^m-1}\sum_{\substack{\bsk\in \NN_0^{\tau+1}\setminus \{\bszero\}\\ \bsk_{\tau}\notin P^{\perp}(\bsq_{\tau},p) \\ \text{$k_j<b^m$ for $1\le j\le \tau$}\\ 0<k_{\tau+1}<b^m}}\phi\left( b^{-\tilde{\mu}(\bsa;\bsk)} \right) \\
& \le \frac{1}{b^m-1}\sum_{\substack{\bsk\in \NN_0^{\tau+1}\setminus \{\bszero\}\\ \text{$k_j<b^m$ for $1\le j\le \tau$}\\ 0<k_{\tau+1}<b^m}}\phi\left( b^{-\tilde{\mu}(\bsa;\bsk)} \right).
\end{align*}

Finally, using Jensen's inequality (\ref{eq:jensen}) again, we have
\begin{align*}
& \quad \phi\left( B_{\bsu}(\bsq_{\tau+1},p)\right) \\
& \le \phi\left( B_{\bsu}(\bsq_{\tau},p)\right) + \phi\left( \theta_{\bsu}(\bsq_{\tau+1},p) \right) \\
& \le \frac{1}{b^m-1}\sum_{\substack{\bsk\in \NN_0^\tau\setminus \{\bszero\}\\ k_j<b^{m}, \forall j}}\phi\left(b^{-\tilde{\mu}(\bsa;\bsk)}\right)+ \frac{1}{b^m-1} \sum_{\substack{\bsk\in \NN_0^{\tau+1}\setminus \{\bszero\}\\ \text{$k_j<b^m$ for $1\le j\le \tau$}\\ 0<k_{\tau+1}<b^m}}\phi\left( b^{-\tilde{\mu}(\bsa;\bsk)} \right) \\
& = \frac{1}{b^m-1}\sum_{\substack{\bsk\in \NN_0^{\tau+1}\setminus \{\bszero\}\\ k_j<b^{m}, \forall j}}\phi\left(b^{-\tilde{\mu}(\bsa;\bsk)}\right) .
\end{align*}
Hence the result follows.
\end{proof}

Now we are ready to prove Theorem~\ref{thm:cbc_bound}.
\begin{proof}[Proof of Theorem~\ref{thm:cbc_bound}]
From Proposition~\ref{prop:cbc_bound} in which let $\tau=ds$, and using the identity (\ref{eq:metric_identity}) and the fact that $\Ecal_d$ is bijective between $\{0,1,\ldots,b^m-1\}^{d}$ and $\{0,1,\ldots,b^{dm}-1\}$, we have
\begin{align*}
  B_{\bsu}(\bsq,p) & \le \phi^{-1}\left[ \frac{1}{b^m-1}\sum_{\substack{\bsk\in \NN_0^{ds}\setminus \{\bszero\}\\ k_j<b^{m}, \forall j}}\phi\left(b^{-\tilde{\mu}(\bsa;\bsk)}\right)\right] \\
  & = \phi^{-1}\left[ \frac{1}{b^m-1}\sum_{\substack{\bsk\in \NN_0^{ds}\setminus \{\bszero\}\\ k_j<b^{m}, \forall j}}\phi\left(b^{-\mu(\bsa;\Ecal_d(\bsk))}\right)\right] \\
  & = \phi^{-1}\left[ \frac{1}{b^m-1}\sum_{\substack{\bsk\in \NN_0^{s}\setminus \{\bszero\}\\ k_j<b^{dm}, \forall j}}\phi\left(b^{-\mu(\bsa;\bsk)}\right)\right] ,
\end{align*}
which implies the result.
\end{proof}

So far, we have proved the following bound on the worst-case error:
\begin{align}\label{eq:cbc_error_bound}
 & \quad e^{\wor}(\Fcal_{s,\bsu};\Dcal_d(P(\bsq,p))) \nonumber \\
 & \le C_{\bsu}-1+C_{\bsu}\phi^{-1}\left[ \frac{1}{b^m-1}\sum_{\substack{\bsk\in \NN_0^{s}\setminus \{\bszero\}\\ k_j<b^{dm}, \forall j}}\phi\left(b^{-\mu(\bsa;\bsk)}\right)\right] ,
\end{align}
for a concave and unbounded monotonic increasing function $\phi:[0,\infty)\to [0,\infty)$. In the following section we investigate the error bound in more detail using the function $\phi$ defined in \eqref{eq:jensen_b_2} for $b=2$ and in \eqref{eq:jensen_b_geq_3} for $b\ge 3$.

\subsection{Dependence of the error bounds on the dimension}\label{subsec:dependence}
Here we study the dependence of the worst-case error bounds with a super-polynomial convergence on the dimension by partly relying on the results in \cite{Suzuki15_2}. First we focus on the term
\begin{align*}
  \sum_{\substack{\bsk\in \NN_0^{s}\setminus \{\bszero\}\\ k_j<b^{dm}, \forall j}}\phi\left(b^{-\mu(\bsa;\bsk)}\right) .
\end{align*}
As mentioned in Subsection~\ref{subsec:result}, we use the purposely-designed concave function as in \eqref{eq:jensen_b_2} for $b=2$ and in \eqref{eq:jensen_b_geq_3} for $b\ge 3$. Note again that we set $\phi(0)=0$ for any $b$ and $0<\lambda\le 1$, and that the function $\phi$ is concave and unbounded monotonic increasing on $[0,\infty)$. We have the following result.

\begin{lemma}\label{lem:sum_phi_bound}
Let $\phi:[0,\infty) \to [0,\infty)$ be given as in (\ref{eq:jensen_b_2}) and (\ref{eq:jensen_b_geq_3}). If a sequence of the weights $\bsu$ satisfies $\liminf_{j\to \infty}\log(u_j^{-1})/j^r>0$ for some $r>0$, then there exists a constant $D'_{r,\lambda}$ depending only on $r$ and $\lambda$ such that
  \begin{align*}
    \sum_{\substack{\bsk\in \NN_0^{s}\setminus \{\bszero\}\\ k_j<b^{dm}, \forall j}}\phi\left(b^{-\mu(\bsa;\bsk)}\right) \le D'_{r,\lambda} ,
  \end{align*}
for any $(r+1)/(2r+1)<\lambda \le 1$.
\end{lemma}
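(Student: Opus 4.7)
The plan is to upper-bound the sum by one that is independent of $s$ and extends naturally to infinite dimension, then exploit the multiplicative structure $b^{-\mu(\bsa;\bsk)} = \prod_j b^{-\mu(a_j;k_j)}$ together with the specific form of $\phi$ to factor across coordinates. Since every summand is non-negative, I would first replace the finite sum by the sum over all finitely-supported $\bsk \in \NN_0^{\NN} \setminus \{\bszero\}$, dropping the constraint $k_j < b^{dm}$. This enlarged sum splits naturally according to the piecewise definition of $\phi$: a ``small-$x$'' part where $b^{-\mu(\bsa;\bsk)}$ lies below the branch threshold (super-exponentially decaying branch) and a ``large-$x$'' part (linear branch).

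Only finitely many $\bsk$ can contribute to the large-$x$ part, because each nonzero coordinate of $\bsk$ adds at least $1+a_j$ to $\mu(\bsa;\bsk)$ and the hypothesis on $\bsu$ forces $a_j\to\infty$; this piece is a finite sum absorbed into the final constant. For the small-$x$ part, $\phi(b^{-\mu(\bsa;\bsk)}) = b^{-\mu(\bsa;\bsk)^{1/\lambda}}$, and the key observation is that $t\mapsto t^{1/\lambda}$ is superadditive on $[0,\infty)$ for $1/\lambda \ge 1$. Writing $\mu(\bsa;\bsk) = \sum_j \mu(a_j;k_j)$ and working with $j$ large enough that all $\mu(a_j;k_j)\ge 0$, superadditivity gives
\[
\phi\bigl(b^{-\mu(\bsa;\bsk)}\bigr) \;\le\; \prod_{j} b^{-\mu(a_j;k_j)^{1/\lambda}}.
\]
Summing over finitely-supported $\bsk$ collapses the right-hand side into $\prod_{j\ge 1}(1+P_j) - 1$, where $P_j := \sum_{k\ge 1} b^{-\mu(a_j;k)^{1/\lambda}}$.

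To bound $P_j$, I would enumerate $k$ by the number $v$ of nonzero $b$-adic digits and their positions $c_1 > \cdots > c_v > 0$, and apply superadditivity once more to the exponent $(\sum_i c_i + v a_j)^{1/\lambda}$; this decouples the positions from the $va_j$ part and gives $P_j \le \sum_{v\ge 1} \frac{(b-1)^v S^v}{v!}\, b^{-(v a_j)^{1/\lambda}}$ with $S := \sum_{c\ge 1} b^{-c^{1/\lambda}} < \infty$. The hypothesis $\liminf_j \log(u_j^{-1})/j^r > 0$ translates to $a_j \ge c_0 j^r$ for all $j$ beyond some index $j_0$, so for such $j$ the $v=1$ term dominates and $P_j \le C\, b^{-c_0^{1/\lambda} j^{r/\lambda}}$, which is summable in $j$. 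Consequently $\prod_{j\ge 1} (1+P_j) \le \exp\bigl(\sum_j P_j\bigr)$ is finite.

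The delicate point, in my view, is not any single estimate but the bookkeeping required to obtain an $s$-independent constant: the finitely many coordinates $j<j_0$ (where $a_j$ may be small or even negative, so that some $\mu(a_j;k_j)$ could be negative and invalidate the coordinate-by-coordinate superadditivity) must be separated out, and the large-$x$ contribution must be handled via the monotonicity $u_1\ge u_2\ge\cdots$. Both corrections are finite and can be folded into $D'_{r,\lambda}$. The restriction $\lambda > (r+1)/(2r+1)$ does not appear essential for any of the convergence estimates above; I expect it is imposed here in anticipation of Corollary~\ref{cor:cbc_bound}, where it governs the super-polynomial rate obtained after applying $\phi^{-1}$.
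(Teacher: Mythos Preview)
Your argument hinges on reading $\phi(b^{-\mu}) = b^{-\mu^{1/\lambda}}$ from the displayed formulae and then invoking superadditivity of $t \mapsto t^{1/\lambda}$ (valid since $1/\lambda \ge 1$) to factorise across coordinates. But the paper's intended function satisfies $\phi(b^{-x}) = b^{-x^{\lambda}}$: this is stated explicitly just before the formulae (``we use $\phi$ which maps $b^{-\mu(\bsa;\bsk)}$ to $b^{-(\mu(\bsa;\bsk))^{\lambda}}$''), is used verbatim in the first line of the paper's proof, and is what makes $\phi^{-1}$ produce the exponent $1/\lambda$ in Corollary~\ref{cor:cbc_bound}. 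The displayed definitions evidently carry a typo in the exponent. With the correct exponent $\lambda \le 1$, the map $t \mapsto t^{\lambda}$ is \emph{sub}additive, so the inequality you need, $\bigl(\sum_j \mu(a_j;k_j)\bigr)^{\lambda} \ge \sum_j \mu(a_j;k_j)^{\lambda}$, goes the wrong way and the product bound collapses. (Independently, with exponent $1/\lambda > 1$ the function $x \mapsto b^{-(\log_b(1/x))^{1/\lambda}}$ is convex near $0$, so it could not serve as the concave $\phi$ in Theorem~\ref{thm:cbc_bound} anyway.) The red flag you already noticed --- that your estimates never use the restriction $\lambda > (r+1)/(2r+1)$ --- is exactly the symptom of this.

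The paper does not factor at all. It stratifies by level sets of $\mu(\bsa;\cdot)$: writing $\vol_{\bsa}(i+1)$ for the number of $\bsk \ne \bszero$ with $i \le \mu(\bsa;\bsk) < i+1$, one bounds $\sum_{\bsk} b^{-\mu(\bsa;\bsk)^{\lambda}} \le \sum_{i \ge 1} \vol_{\bsa}(i+1)\, b^{-i^{\lambda}}$, plus a finite piece coming from the $\bsk$ with $\mu(\bsa;\bsk) < 1$. Results from \cite{Suzuki15_2} (via a modified weight $\mu'$) then give a dimension-free bound $\vol_{\bsa}(i+1) \le \exp\bigl(A\,(i+c+1)^{(r+1)/(2r+1)}\bigr)$, and the series $\sum_i \exp\bigl(A\,i^{(r+1)/(2r+1)} - i^{\lambda} \log b\bigr)$ converges precisely when $\lambda > (r+1)/(2r+1)$. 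So the constraint on $\lambda$ is not anticipatory but the heart of the lemma: it balances the entropy of the index set against the decay that $\phi$ provides, and no coordinate-wise product structure can capture this tradeoff.
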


\begin{proof}
Since the case $b=2$ can be proven in the same way as the case $b\ge 3$, we only consider the latter case in the following. Since $\phi(b^{-x})=b^{-x^\lambda}$ for $x\ge 1$, we have
  \begin{align}\label{eq:error_dependence_1}
    \sum_{\substack{\bsk\in \NN_0^{s}\setminus \{\bszero\}\\ k_j<b^{dm}, \forall j}}\phi\left(b^{-\mu(\bsa;\bsk)}\right) & \le \sum_{\substack{\bsk\in \NN_0^{s}\setminus \{\bszero\}\\ \mu(\bsa;\bsk)<1}}\phi\left(b^{-\mu(\bsa;\bsk)}\right)+\sum_{\substack{\bsk\in \NN_0^{s}\setminus \{\bszero\}\\ \mu(\bsa;\bsk)\ge 1}}\phi\left(b^{-\mu(\bsa;\bsk)}\right) \nonumber \\
    & = \sum_{\substack{\bsk\in \NN_0^{s}\setminus \{\bszero\}\\ \mu(\bsa;\bsk)<1}}\phi\left(b^{-\mu(\bsa;\bsk)}\right)+\sum_{\substack{\bsk\in \NN_0^{s}\setminus \{\bszero\}\\ \mu(\bsa;\bsk)\ge 1}}b^{-(\mu(\bsa;\bsk))^{\lambda}} \nonumber \\
    & = \sum_{\substack{\bsk\in \NN_0^{s}\setminus \{\bszero\}\\ \mu(\bsa;\bsk)<1}}\phi\left(b^{-\mu(\bsa;\bsk)}\right)+ \sum_{i=1}^{\infty}\sum_{\substack{\bsk\in \NN_0^{s}\setminus \{\bszero\}\\ i\le \mu(\bsa;\bsk)<i+1}}b^{-(\mu(\bsa;\bsk))^{\lambda}} \nonumber \\
    & \le \sum_{\substack{\bsk\in \NN_0^{s}\setminus \{\bszero\}\\ \mu(\bsa;\bsk)<1}}\phi\left(b^{-\mu(\bsa;\bsk)}\right) + \sum_{i=1}^{\infty}\vol_{\bsa}(i+1)b^{-i^{\lambda}} ,
  \end{align}
where 
  \begin{align*}
    \vol_{\bsa}(i+1) := \left| \{\bsk \in \NN_0^s \setminus \{\bszero\}: i\leq \mu(\bsa;\bsk)<i+1 \}\right| .
  \end{align*}

We now introduce the modified function $\mu'(\bsa; \cdot)$ as follows \cite[Definition~2.5]{Suzuki15_2}: For a real number $a$ and $k\in \NN$ whose $b$-adic expansion is given by $k = \kappa_1 b^{c_1-1}+\kappa_2 b^{c_2-1}+\cdots +\kappa_v b^{c_v-1}$ such that $\kappa_1,\dots,\kappa_v \in \{1,2,\dots,b-1\}$ and $c_1>\ldots > c_v >0$, we define
  \begin{align*}
    \mu'(a;k) := \sum_{i=1}^{v}\max(c_i+a ,1) ,
  \end{align*}
and $\mu'(a;0):=0$. For a vector $\bsa$ and $\bsk\in \NN_0^s$, we define
  \begin{align*}
    \mu'(\bsa;\bsk) := \sum_{j=1}^{s}\mu'(a_j;k_j) .
  \end{align*}
Then it holds from \cite[Section~2.2]{Suzuki15_2} that there exists a constant $c\ge 0$ such that $\mu(\bsa; \bsk) \ge \mu'(\bsa; \bsk) - c$ for any $\bsk\in \NN_0^s$. Thus we have $\vol_{\bsa}(i+1) \leq \vol_{\bsa}'(i+c+1)$ where
  \begin{align*}
    \vol_{\bsa}'(i+c+1) := \left| \{\bsk \in \NN_0^s \setminus \{\bszero\}: \mu'(\bsa;\bsk) <i+c+1 \}\right| .
  \end{align*}
  
Now the assumption $\liminf_{j\to \infty}\log(u_j^{-1})/j^r>0$ implies that there exist constants $a>0$ and $A \in \NN_0$ such that $a_j \ge aj^r$ holds for all $j > A$. Then it holds from \cite[Section~6.3]{Suzuki15_2} that the constant $c$ can be bounded above independently of $s$, and moreover from \cite[Lemma~6.12]{Suzuki15_2} that $\vol_{\bsa}'(i+c+1)$ is bounded by
  \begin{align*}
    \vol_{\bsa}'(i+c+1) \le \exp\left\{ A_{a,r}(i+c+1)^{(r+1)/(2r+1)}\right\} ,
  \end{align*}
where
  \begin{align*}
    A_{a,r} = (b-1)\left( A + \frac{\Gamma(1/r)}{ra^{1/r}}\right) + N + 1.
  \end{align*}
In the above, we write $N = (b-1)\sum_{j=1}^s|\{i \in \NN \mid i + a_j \leq 1\}|$, which is bounded by a constant independent of $s$ under the assumption $\liminf_{j\to \infty}\log(u_j^{-1})/j^r>0$, and $\Gamma(z)=\int_{0}^{\infty}t^{z-1}\exp(-t)\rd t$ denotes the Gamma function.

Thus, for the second term of (\ref{eq:error_dependence_1}), we obtain
  \begin{align*}
    \sum_{i=1}^{\infty}\vol_{\bsa}(i+1)b^{-i^{\lambda}} \le \sum_{i=1}^{\infty}\exp\left\{ A_{a,r}(i+c+1)^{(r+1)/(2r+1)}-i^{\lambda}\log b\right\}.
  \end{align*}
The above infinite sum converges when $\lambda > (r+1)/(2r+1)$, which implies that the second term of (\ref{eq:error_dependence_1}) is bounded by a constant independent of $s$.

Again from \cite[Section~6.3]{Suzuki15_2}, when $\liminf_{j\to \infty}\log(u_j^{-1})/j^r>0$ it is possible to show that the number of $\bsk\in \NN_0^{s}\setminus \{\bszero\}$ such that $\mu(\bsa;\bsk)<1$ is bounded by a constant independent of $s$. Thus, the first term of (\ref{eq:error_dependence_1}) is bounded by a constant independent of $s$. Since both terms of (\ref{eq:error_dependence_1}) are finite and depend only on $r$ and $\lambda$, we have the result.
\end{proof}

\begin{remark}\label{rm:endpoint}
If $A_{a,r}< \log b$, $D'_{r,\lambda}$ is finite even when $\lambda = (r+1)/(2r+1)$, and thus, Lemma~\ref{lem:sum_phi_bound} and Corollary~\ref{cor:cbc_bound} also hold for the endpoint $\lambda=(r+1)/(2r+1)$. This corresponds to Theorem~\ref{thm:suzuki} (shown by Suzuki in \cite{Suzuki15_2}), while our approach is constructive. 
\end{remark}

Now we are ready to prove Corollary~\ref{cor:cbc_bound}.
\begin{proof}[Proof of Corollary~\ref{cor:cbc_bound}]
Since $D'_{r,\lambda}$ is independent of $s$, there exists an $m_0\in \NN$ independent of $s$ such that either $D'_{r,\lambda}/(b^{m_0}-1)\le 1/e$ for the case $b=2$, or $D'_{r,\lambda}/(b^{m_0}-1)\le 1/b$ for the case $b\ge 3$. For $m\ge m_0$, we obtain
\begin{align*}
B_{\bsu}(\bsq,p) & \le \phi^{-1}\left[\frac{1}{b^m-1}\sum_{\substack{\bsk\in \NN_0^{s}\setminus \{\bszero\}\\ k_j<b^{dm}, \forall j}}\phi\left(b^{-\tilde{\mu}(\bsa;\bsk)}\right)\right] \\
& \le \phi^{-1}\left[\frac{D'_{r,\lambda}}{b^m-1}\right] = b^{-\left(\log_b\left(\frac{b^m-1}{D'_{r,\lambda}}\right)\right)^{1/\lambda}} .
\end{align*}
Since $B_{\bsu}(\bsq,p)$ is obviously finite even for $m< m_0$ and can be bounded independently of $s$ from Proposition~\ref{prop:cbc_bound} and Lemma~\ref{lem:sum_phi_bound}, there exists constants $D''_{r,\lambda},E''_{r,\lambda}>0$ both independent of $s$ such that 
\begin{align*}
B_{\bsu}(\bsq,p) \le E''_{r,\lambda} b^{-\left(\log_b\left(\frac{b^m-1}{D''_{r,\lambda}}\right)\right)^{1/\lambda}} ,
\end{align*}
for $m< m_0$ and any $(r+1)/(2r+1)< \lambda \le 1$. As $D'_{r,\lambda}$, $D''_{r,\lambda}$ and $E''_{r,\lambda}$ are all independent of $s$, there exists constants $D_{r,\lambda},E_{r,\lambda}>0$ such that the first part of Corollary~\ref{cor:cbc_bound} holds.

Let us consider the term $C_{\bsu}$. Using the inequality $\log(1+x)\le x$ for $x>0$, $C_{\bsu}$ can be bounded by
\begin{align*}
  \log(C_{\bsu}) & = \sum_{j=1}^{s}\sum_{h=1}^{d}\sum_{i=m+1}^{\infty}\log\left( 1+\frac{b-1}{b^{d(i-1)+h+a_j}}\right)\\
  & \le \sum_{j=1}^{s}\sum_{h=1}^{d}\sum_{i=m+1}^{\infty}\frac{b-1}{b^{d(i-1)+h+a_j}}= \frac{1}{b^{dm}}\sum_{j=1}^{s}\frac{1}{b^{a_j}} .
\end{align*}
Since the function $\exp(\cdot)$ is convex, we have
\begin{align}\label{bound_Cu}
  C_{\bsu}-1 & \le \exp\left(\frac{1}{b^{dm}}\sum_{j=1}^{s}\frac{1}{b^{a_j}}\right)-\exp(0) \le \frac{1}{b^{dm}}\left\{\exp\left(\sum_{j=1}^{s}\frac{1}{b^{a_j}}\right)-1\right\} .
\end{align}
As in the proof of Lemma~\ref{lem:sum_phi_bound}, the assumption $\liminf_{j\to \infty}\log(u_j^{-1})/j^r>0$ implies that there exist constants $a>0$ and $A \in \NN_0$ such that $a_j \ge aj^r$ holds for all $j > A$. Thus, we have
\begin{align*}
  \exp \left(\sum_{j=1}^{s}\frac{1}{b^{a_j}}\right) & = \exp \left(\sum_{j=1}^{A}\frac{1}{b^{a_j}}+\sum_{j=A+1}^{s}\frac{1}{b^{a_j}}\right) \\
  & \le \exp \left\{\sum_{j=1}^{A}\left( \frac{1}{b^{a_j}}-\frac{1}{b^{aj^r}}\right)+\sum_{j=1}^{\infty}\frac{1}{b^{aj^r}}\right\} \\
  & \le \exp \left\{ A\max_{j=1,\ldots,A}\left( \frac{1}{b^{a_j}}-\frac{1}{b^{aj^r}}\right)+\frac{\Gamma(1/r)}{r(a\log b)^{1/r}}\right\} =: E''_r,
\end{align*}
where we use the result of \cite[Lemma~6.11]{Suzuki15_2} in the second inequality and $\Gamma$ again denotes the Gamma function. Hence we have 
\begin{align*}
  C_{\bsu}-1 \le \frac{E''_r}{b^{dm}}\quad \text{and}\quad C_{\bsu}\le E''_r .
\end{align*}
Now let $d\ge m^{r/(r+1)}$. Using the above bounds on $B_{\bsu}(\bsq,p)$ and $C_{\bsu}$, we have
\begin{align*}
  e^{\wor}(\Fcal_{s,\bsu};\Dcal_d(P(\bsq,p))) & \le C_{\bsu}-1+C_{\bsu}B_{\bsu}(\bsq,p) \\
  & \le E''_r\left\{b^{-m^{(2r+1)/(r+1)}}+E_{r,\lambda}b^{-\left(\log_b\left(\frac{b^m-1}{D_{r,\lambda}}\right)\right)^{1/\lambda}} \right\} \\
  & \le E'_{r,\lambda}b^{-\left(\log_b\left(\frac{b^m-1}{D_{r,\lambda}+1}\right)\right)^{1/\lambda}} ,
\end{align*}
where $E'_{r,\lambda}=E''_r(1+E_{r,\lambda})$, which proves the second part of Corollary~\ref{cor:cbc_bound}.
\end{proof}

\subsection{Fast construction algorithm}\label{subsec:fast_cbc}
We show how one can apply the fast CBC construction of \cite{NC06, NC06_2} using the fast Fourier transform. According to Algorithm~\ref{algorithm:cbc}, we choose an irreducible polynomial $p$ with $\deg(p)=m$, set $q_1=1$ and construct the polynomials $q_2,q_3,\ldots,q_{ds}$ inductively. For some $1\le \tau<ds$, assume that $\bsq_{\tau-1}=(q_1,\ldots,q_{\tau-1})$ are already found. Let $\beta = \lfloor \tau /d\rfloor$ and $\gamma=\tau-d(\beta-1)$. Using the notation as in the proof of Proposition~\ref{prop:worst-case_error_interlaced2}, we compute
\begin{align*}
  & \quad B_{\bsu}((\bsq_{\tau-1},q),p) \\
& = -1+\frac{1}{b^m}\sum_{n=0}^{b^m-1}\left(\prod_{j=1}^{\beta-1}\prod_{h=1}^{d}\psi_{a_j,h,m}(x_{n,d(j-1)+h})\right) \prod_{h=1}^{\gamma}\psi_{a_{\beta},h,m}(x_{n,d(\beta-1)+h})  ,
\end{align*}
for all $q\in R_{b,m}$, where $\{\bsx_0,\ldots,\bsx_{b^m-1}\}\subset [0,1)^{\tau}$ is a polynomial lattice point set $P((\bsq_{\tau-1},q),p)$. 

We introduce the following notation
\begin{align*}
  \rho_{n,\tau-1}:=\left(\prod_{j=1}^{\beta-1}\prod_{h=1}^{d}\psi_{a_j,h,m}(x_{n,d(j-1)+h})\right) \prod_{h=1}^{\gamma-1}\psi_{a_{\beta},h,m}(x_{n,d(\beta-1)+h})  ,
\end{align*}
for $0\le n<b^m$, where the empty product is equal to $1$. Then it is straightforward to confirm that 
\begin{align*}
  B_{\bsu}((\bsq_{\tau-1},q),p) & = -1+\frac{1}{b^m}\left[ \rho_{0,\tau-1}\psi_{a_{\beta},\gamma,m}(0) +\sum_{n=1}^{b^m-1}\rho_{n,\tau-1}\psi_{a_{\beta},\gamma,m}(x_{n,\tau}) \right]
\end{align*}
holds. Therefore, in order to find $q=q_{\tau}\in R_{b,m}$ which minimizes $B_{\bsu}((\bsq_{\tau-1},q),p)$ as a function of $q$, we only need to compute
\begin{align}\label{eq:fast_cbc_1}
  \sum_{n=1}^{b^m-1}\rho_{n,\tau-1}\psi_{a_{\beta},\gamma,m}(x_{n,\tau}) 
\end{align}
for all $q\in R_{b,m}$. In the following, we show how we can exploit a feature of polynomial lattice point sets to apply the fast CBC construction using the fast Fourier transform.

Since we choose an irreducible polynomial $p$, there exists a primitive element $g\in R_{b,m}$, which satisfies
\begin{align*}
     \{g^0 \bmod{p}, g^1 \bmod{p}, \ldots, g^{b^m-2} \bmod{p}\} = R_{b,m},
\end{align*}
and $g^{-1} \bmod{p}=g^{b^m-2} \bmod{p}$. When $q=g^i \bmod{p}$, we can rewrite (\ref{eq:fast_cbc_1}) as
\begin{align*}
   c_i & = \sum_{n=0}^{b^m-2}\rho_{g^{-n} \bmod{p},\tau-1}\psi_{a_{\beta},\gamma,m}\left( x_{g^{-n}\bmod{p},\tau}\right) \\
   & = \sum_{n=0}^{b^m-2}\rho_{g^{-n} \bmod{p},\tau-1}\psi_{a_{\beta},\gamma,m}\left( v_m\left( \frac{(g^{i-n}\bmod p)(x)}{p(x)}\right)\right) ,
\end{align*}
for $0\le i<b^m-1$. Here the first argument $g^{-n} \bmod{p}$ of $\rho$ is understood as an integer by identifying the polynomial $g^{-n} \bmod{p}\in \ZZ_b[x]$ with an integer based on its $b$-adic expansion. Let us denote $\bsc=(c_i)_{0\le i< b^m-1}$, $\bsrho_{\tau-1}=(\rho_{g^{-n} \bmod{p},\tau-1})_{0\le n< b^m-1}$, and 
\begin{align*}
   \Psi_{a_\beta,\gamma} = \left[ \psi_{a_{\beta},\gamma,m}\left( v_m\left( \frac{(g^{i-n}\bmod p)(x)}{p(x)}\right)\right)\right] _{0\le i,n<b^m-1} .
\end{align*}
Then we have
\begin{align*}
   \bsc = \Psi_{a_\beta,\gamma}\bsrho_{\tau-1} .
\end{align*}
Let $i_0$ be an integer $0\le i_0 < b^m-1$ which satisfies $c_{i_0}\le c_{i}$ for all $0\le i<b^m-1$. Then we set $q_{\tau}=g^{i_0} \bmod{p}$. After finding $q_{\tau}$, we just update $\bsrho_{\tau}$ by
\begin{align*}
  \rho_{g^{-n} \bmod{p},\tau}:=\rho_{g^{-n} \bmod{p},\tau-1} \psi_{a_{\beta},\gamma,m}\left( v_m\left( \frac{(g^{i_0-n}\bmod p)(x)}{p(x)}\right)\right) .
\end{align*}

Since the matrix $\Psi_{a_\beta,\gamma}$ is circulant, we only need to evaluate one column (or one row) of $\Psi_{a_\beta,\gamma}$ for computing all the elements of $\Psi_{a_\beta,\gamma}$. One column consists of $b^m-1$ elements, each of which can be evaluated in $O(m)$ arithmetic operations. Moreover, the matrix-vector multiplication $\Psi_{a_\beta,\gamma}\bsrho_{\tau-1}$ can be efficiently done in $O(mb^m)$ arithmetic operations by using the fast Fourier transform as shown in \cite{NC06,NC06_2}. This reduces the computational cost significantly as compared to the naive matrix-vector multiplication. Since we construct the polynomials $q_2,q_3,\ldots,q_{ds}$ inductively, the total computational cost becomes $O(dsmb^m)$ arithmetic operations. As for memory, we only need to store the vector $\bsrho_{\tau}$, which requires $O(b^m)$ memory space.

From Corollary~\ref{cor:cbc_bound}, to obtain a worst-case error bound with a dimension-independent super-polynomial convergence, it is sufficient to set $d\ge m^{r/(r+1)}$. This means that the total computational cost is given by $O(sm^{(2r+1)/(r+1)}b^m)$ arithmetic operations, which can be bounded above by $O(sm^2b^m) (= O(sN(\log N)^2))$ arithmetic operations for any $r>0$.
\section{Numerical experiments}\label{sec:exp}
We conclude this paper with numerical experiments. In our experiments, we focus on the case $b=2$ and choose the weights $u_j=2^{-j^r}$ for $r>0$, i.e., $a_j=j^r$, which satisfies the condition $\liminf_{j\to \infty}\log(u_j^{-1})/j^r>0$. In Figure~\ref{fig:1}, we report the values of $B_{\bsu}(\bsq,p)$ as functions of $m\ (=\deg(p))$ up to 15 with several values of $s$ for the cases $r=0.5$ (top), $r=1$ (middle), and $r=2$ (bottom), respectively, where $p$ and $\bsq$ are found by Algorithm~\ref{algorithm:cbc}, and the interlacing factor is given by $d=\lceil m^{r/(r+1)} \rceil$, as Corollary~\ref{cor:cbc_bound} suggests. Since the first $dm$ digits appearing in the dyadic expansion of every coordinate of each point can be non-zero, the range of $m$ is restricted so that $dm\leq 52$, by considering the double precision arithmetic. For $r=0.5$ and $r=1$, one can observe that the rate of convergence is improved as $m$ increases for the low-dimensional cases $s=1$ and $s=2$, while one cannot do so for the higher-dimensional cases within this range of $m$. For $r=2$, one can observe an improvement of the rate of convergence even for the higher-dimensional cases and the convergence behaviors for $s\geq 2$ are almost identical, i.e., dimension-independent.

Next we consider the test function 
\begin{align*}
  f_1(\bsx) = \prod_{j=1}^{s}\exp\left( -\frac{x_j}{2^{j^r}}\right)
\end{align*}
as integrand, which belongs to $\Fcal_{s,\bsu}$ for $u_j=2^{-j^r}$. Figure~\ref{fig:2} shows the absolute integration error $|I(f_1;\Dcal_d(P(\bsq,p)))-I(f_1)|$ as functions of $m$ with several values of $s$ for $r=0.5$ (top), $r=1$ (middle), and $r=2$ (bottom), respectively, where $p$ and $\bsq$ are again found by Algorithm~\ref{algorithm:cbc} for each given $r$. Regardless of $r$, one can observe that a convergence better than $1/N$ is achieved for any $s$. For $r=0.5$, we see a sudden drop between $m=8$ and $m=9$ for the low-dimensional cases $s=1$ and $s=2$. This is because of the increment of the interlacing factor $d$ from $2$ to $3$. A similar behavior can be found for $r=1$ between $m=9$ and $m=10$, where the interlacing factor $d$ is incremented from $3$ to $4$. It is interesting to see that the rate of convergence is improved after $d$ increases. For $r=2$, the rate of convergence is roughly $N^{-4}$ even for $s=16$ and the convergence behaviors for $s\geq 2$ are almost identical.

Finally we consider the following two test functions
\begin{align*}
f_2(\bsx) & = \prod_{j=1}^{s}\left( 1+\frac{w^{j}}{21}\left( -10+42x_j^2-42x_j^5+21x_j^6\right)\right) \; \text{for $w>0$}, \\
f_3(\bsx) & = \prod_{j=1}^{s}\Big( 1+\frac{w^{j}}{8}( 31-84x_j^2+8x_j^3+70x_j^4-28x_j^6+8x_j^7 \\
          & \quad \qquad  -16\cos(1)-16\sin(x_j))\Big) \; \text{for $w>0$}.
\end{align*}
These were used in \cite{DNP14} as examples belonging to a certain function class with finite smoothness. In fact $f_3\notin \Fcal_{s,\bsu}$ for any choice of $\bsu$ with $u_j=2^{-j^r}$ for $r>0$, and thus, there is no theoretical guarantee that our constructed interlaced polynomial lattice rule does work efficiently. We compare the performance of our constructed rule with that of the Sobol' sequence. We search for $p$ and $\bsq$ by setting $r=1$ in Algorithm~\ref{algorithm:cbc}. Figures~\ref{fig:3} and \ref{fig:4} show the absolute integration error as functions of $m$ for $f_2$ and $f_3$, respectively. We see that our constructed rule is superior to Sobol' sequence in terms of both the magnitude of the error and the rate of convergence. Regardless of the integrand and the dimension, the Sobol' sequence achieves an error convergence of order $N^{-1}$. On the other hand, our constructed rule can exploit the smoothness of the functions and achieve an error convergence of higher order. Thus our constructed rule works even for some functions not belonging to $\Fcal_{s,\bsu}$.

\section*{Acknowledgements}

The research of J. Dick, K. Suzuki and T. Yoshiki was supported under the Australian Research Councils Discovery Projects funding scheme (project number DP150101770).
The research of T. Goda was supported by JSPS Grant-in-Aid for Young Scientists No.15K20964.


\begin{figure}
\begin{center}
\includegraphics[width=7cm]{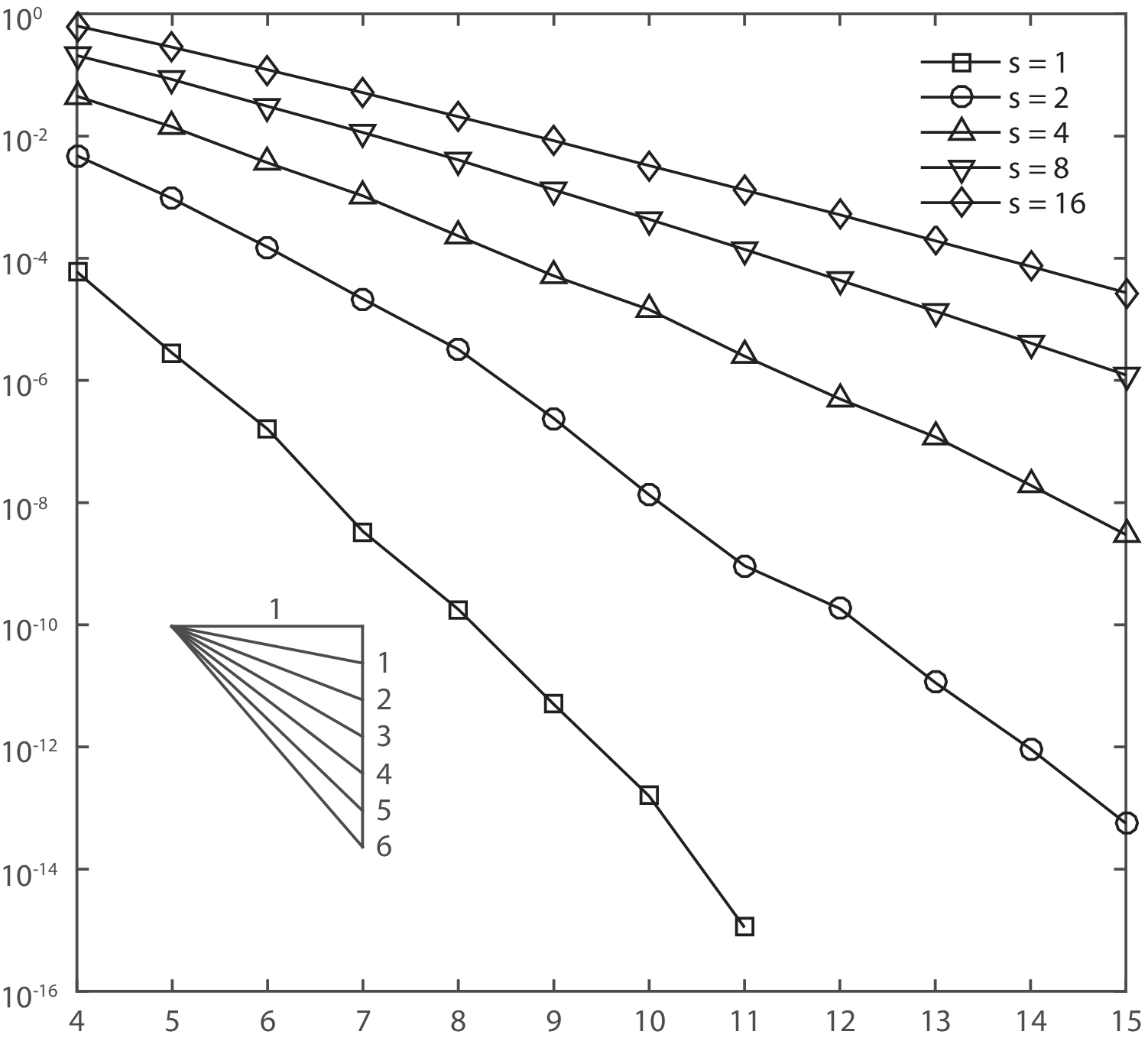}
\includegraphics[width=7cm]{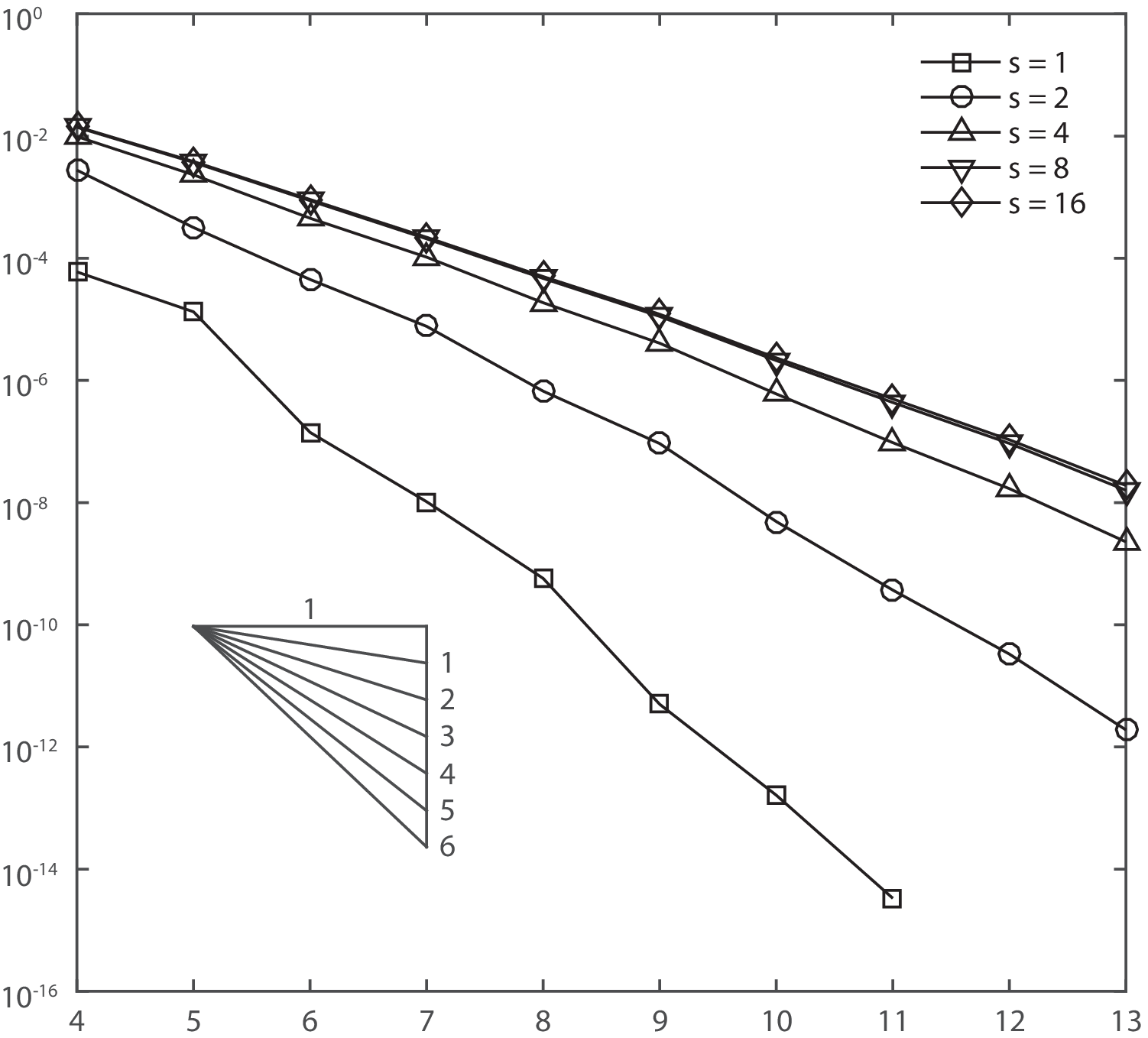}
\includegraphics[width=7cm]{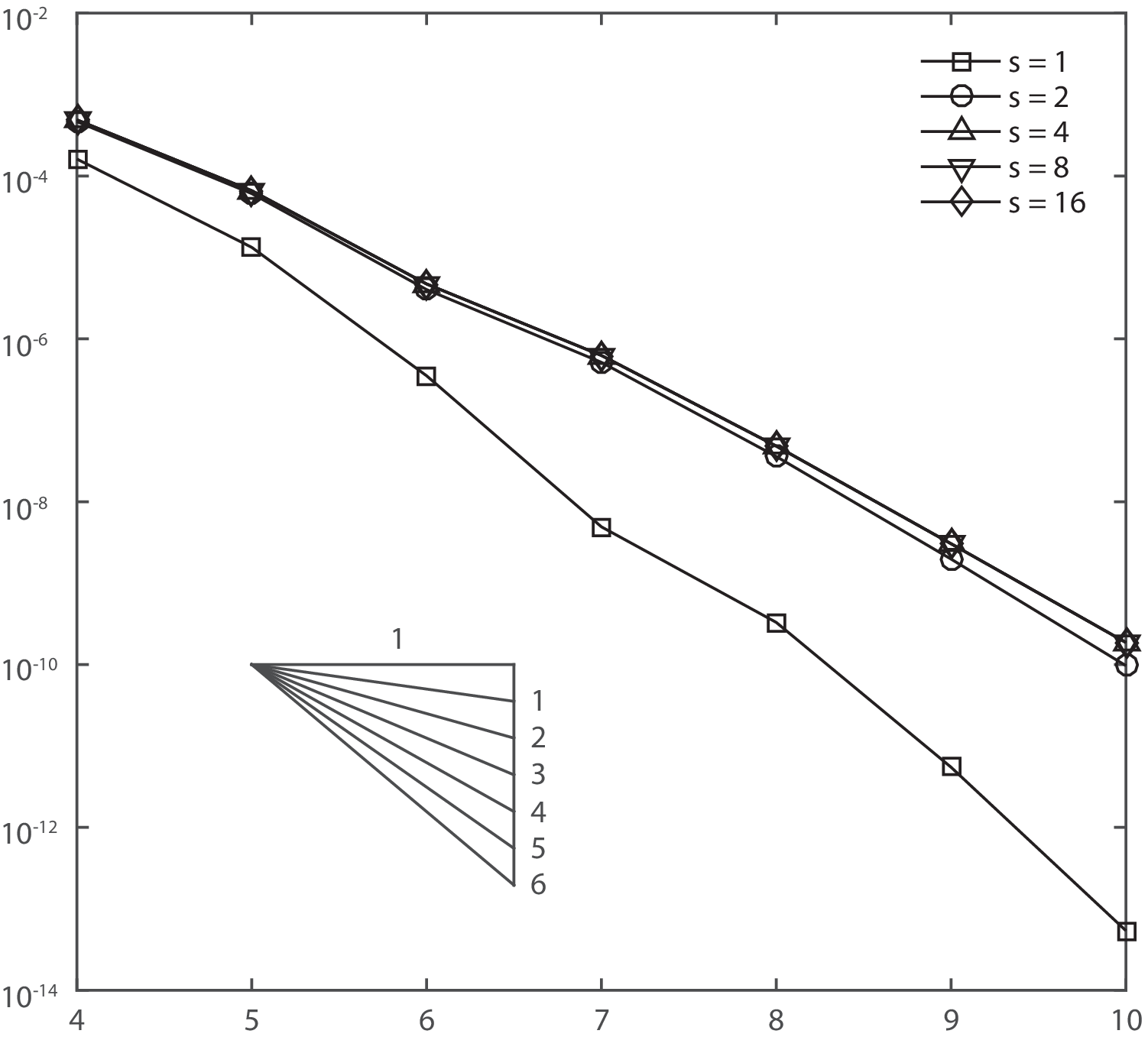}
\caption{$B_{\bsu}(\bsq,p)$ as functions of $m$ for $r=0.5$ (top), $r=1$ (middle), and $r=2$ (bottom).}
\label{fig:1}
\end{center}
\end{figure}
\begin{figure}
\begin{center}
\includegraphics[width=7cm]{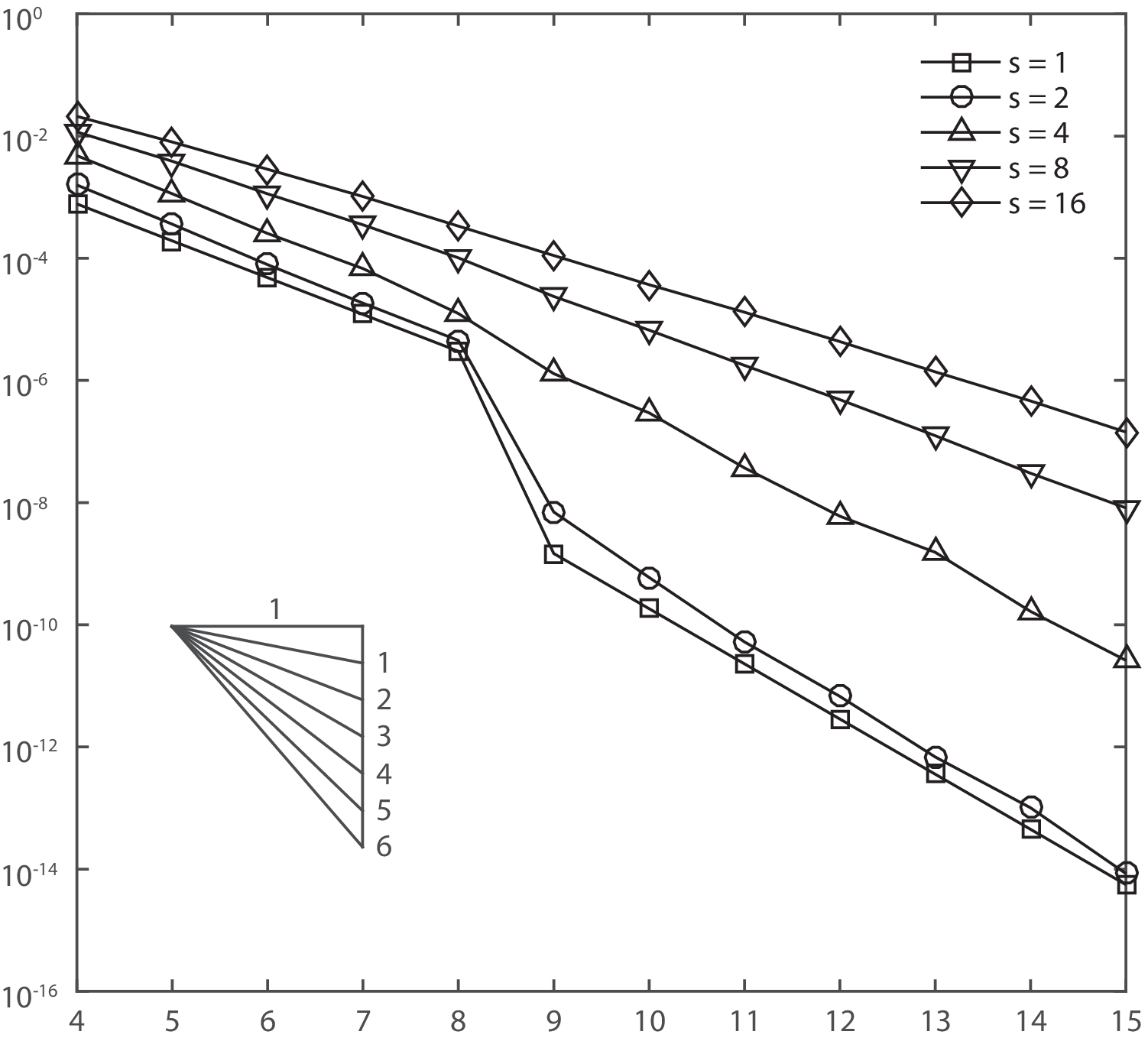}
\includegraphics[width=7cm]{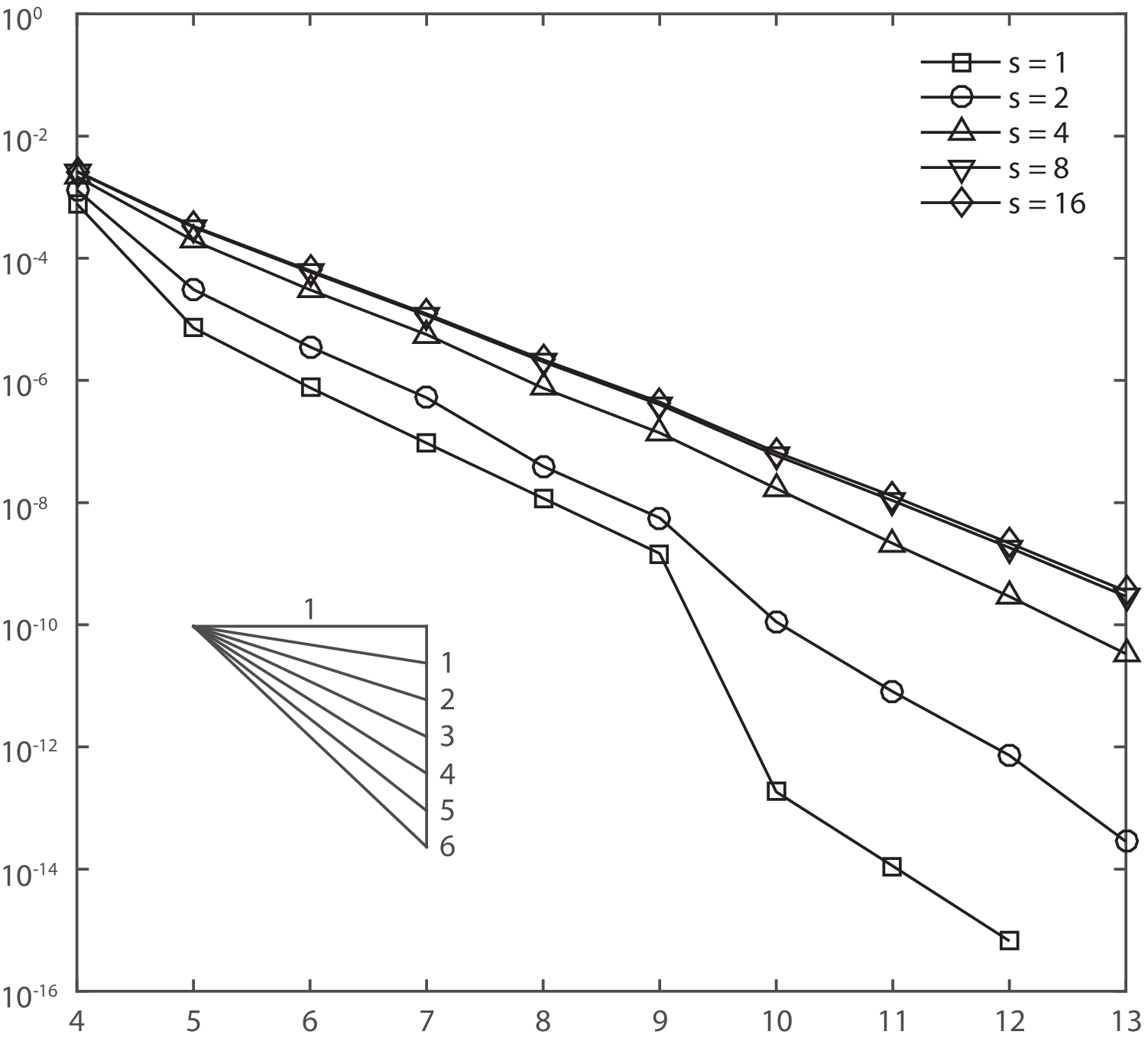}
\includegraphics[width=7cm]{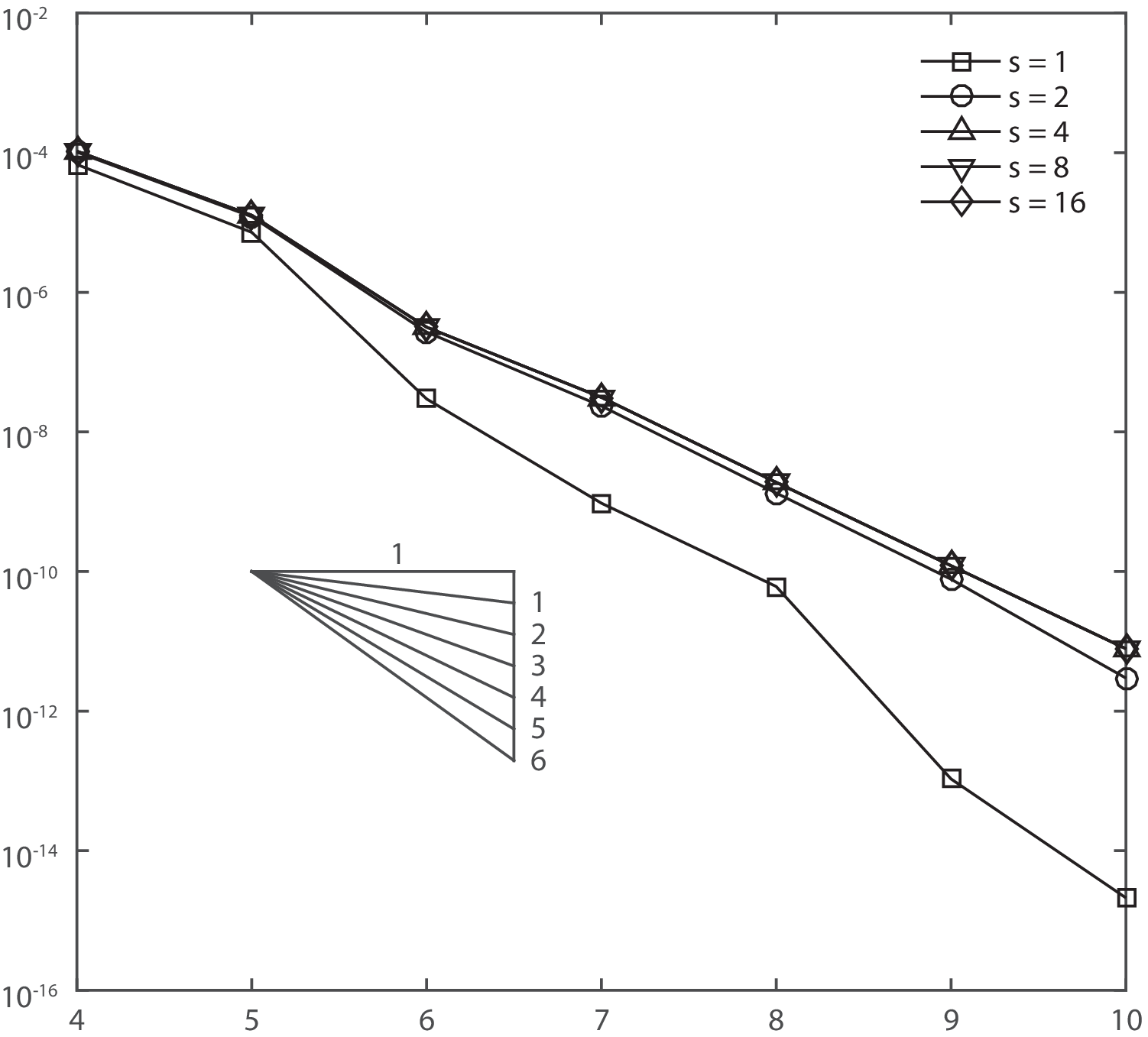}
\caption{The absolute integration error as functions of $m$ for $f_1$ with $r=0.5$ (top), $r=1$ (middle), and $r=2$ (bottom).}
\label{fig:2}
\end{center}
\end{figure}
\begin{figure}
\begin{center}
\includegraphics[width=6cm]{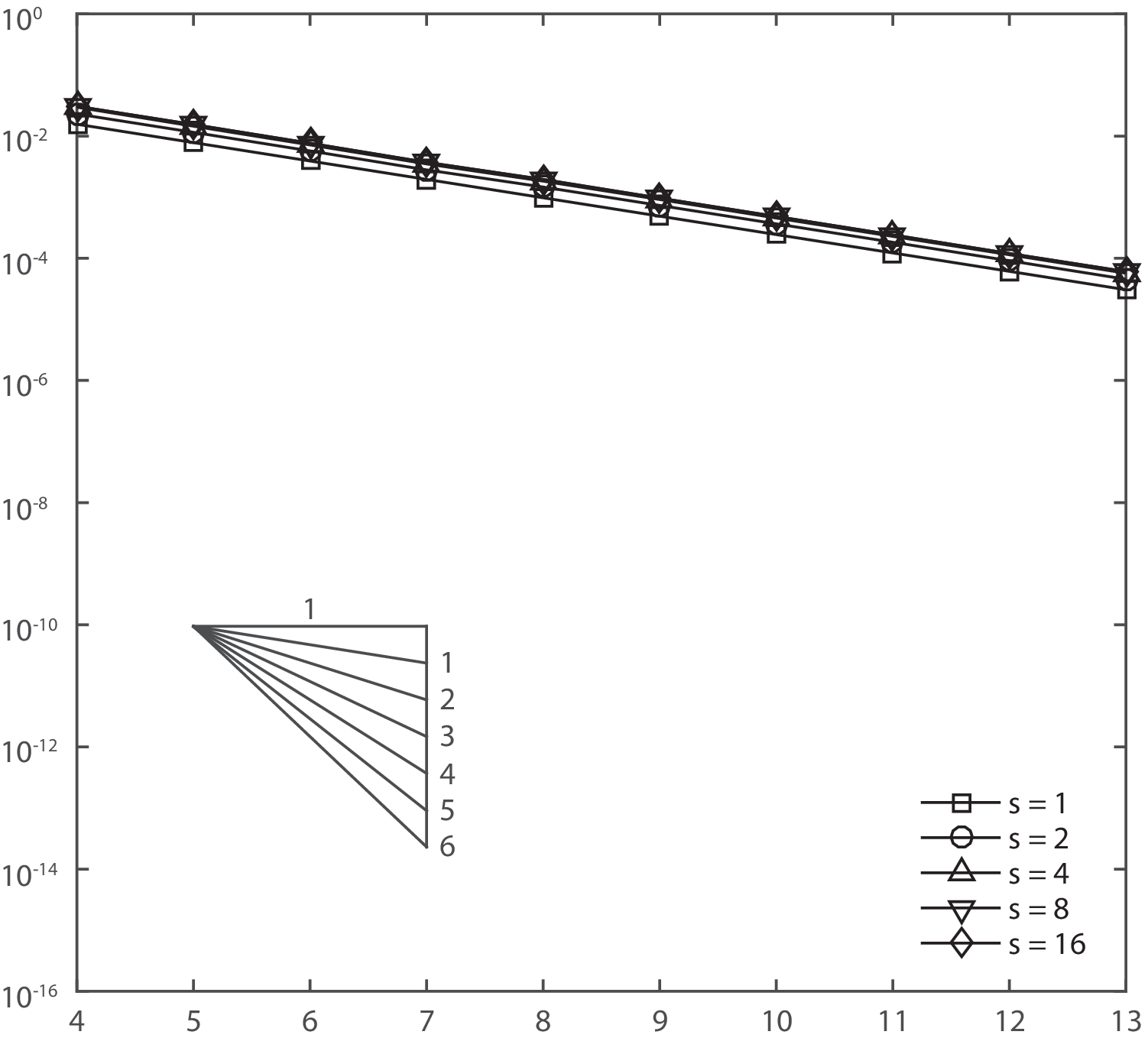}
\includegraphics[width=6cm]{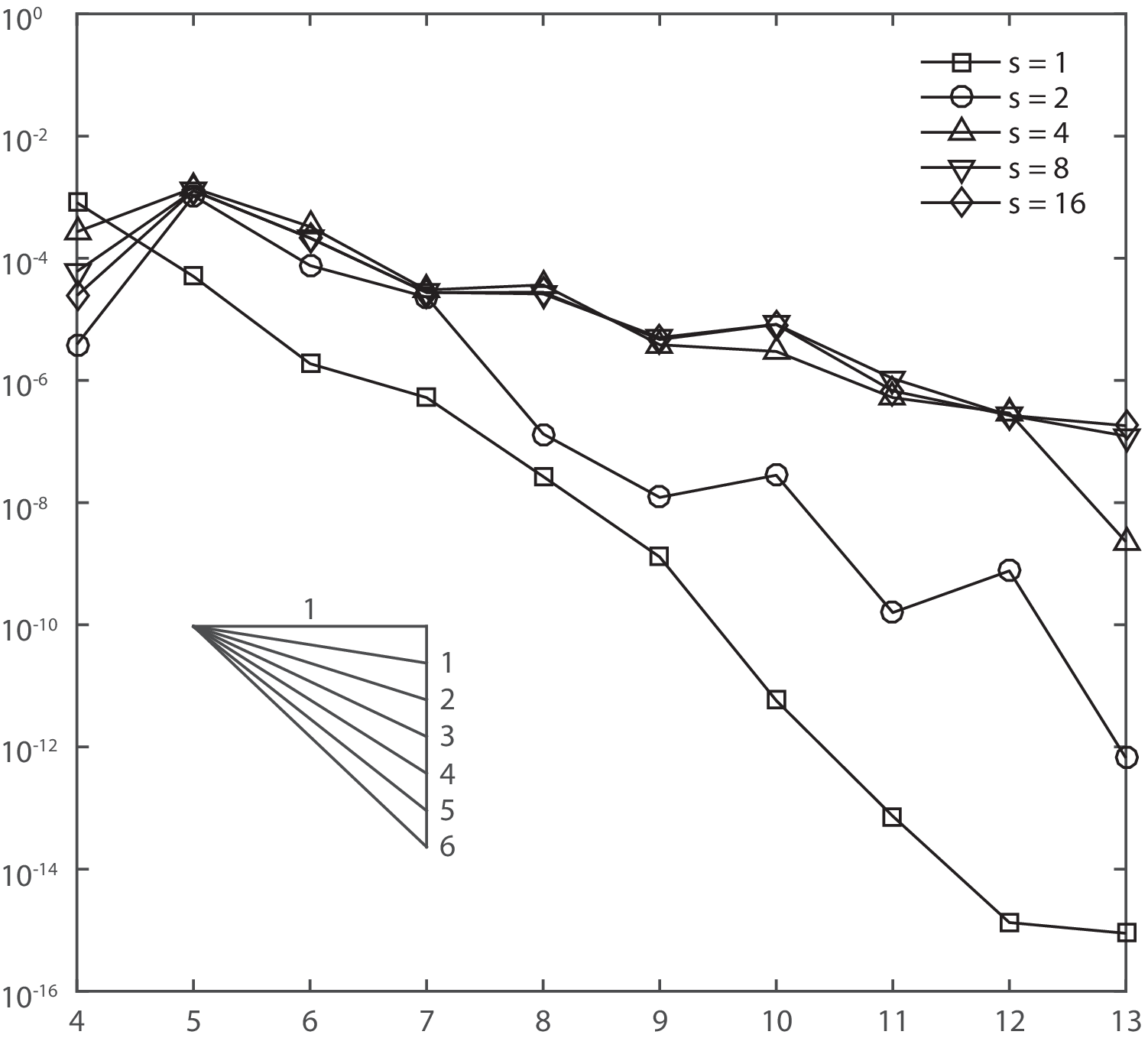}\\
\includegraphics[width=6cm]{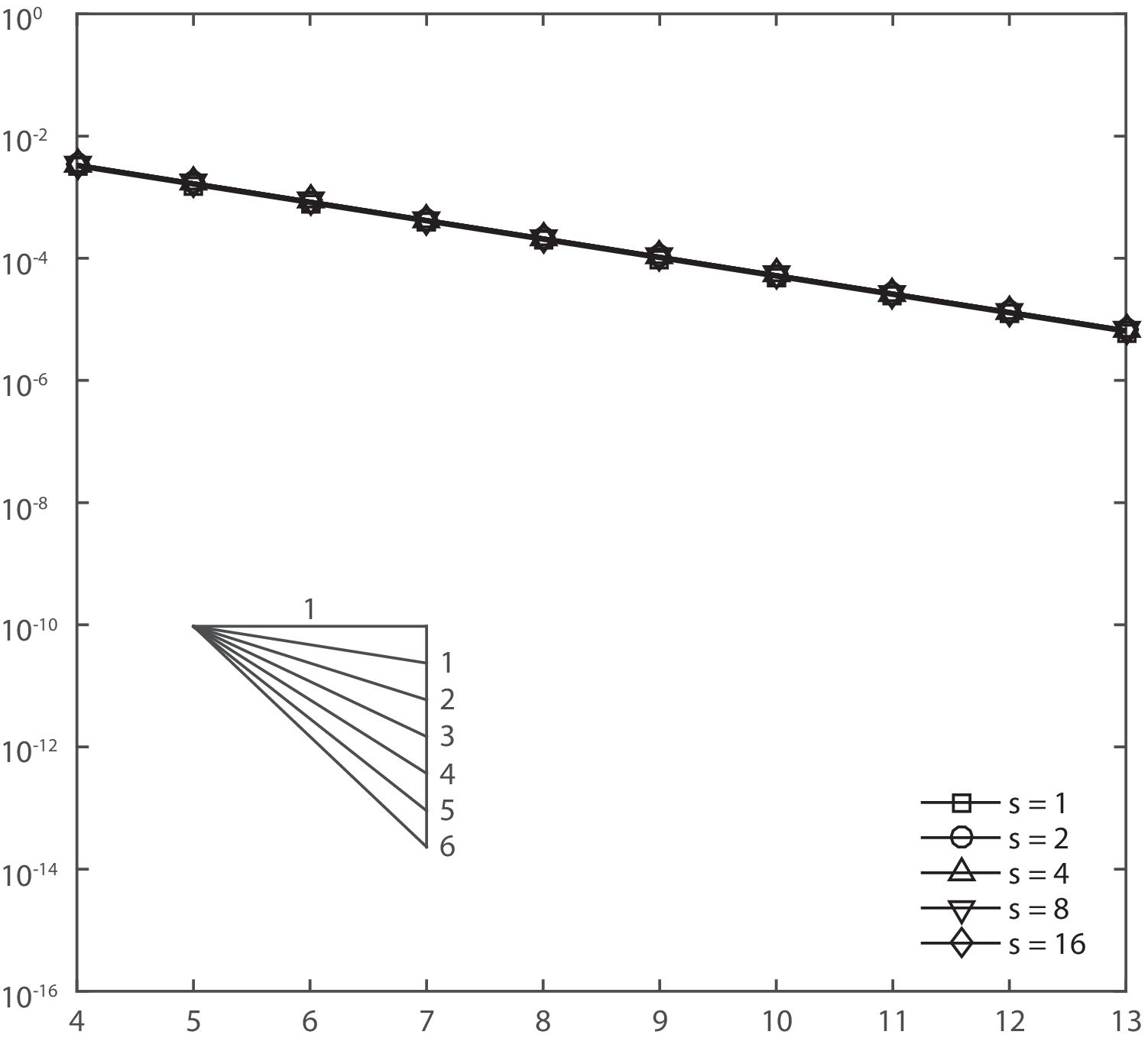}
\includegraphics[width=6cm]{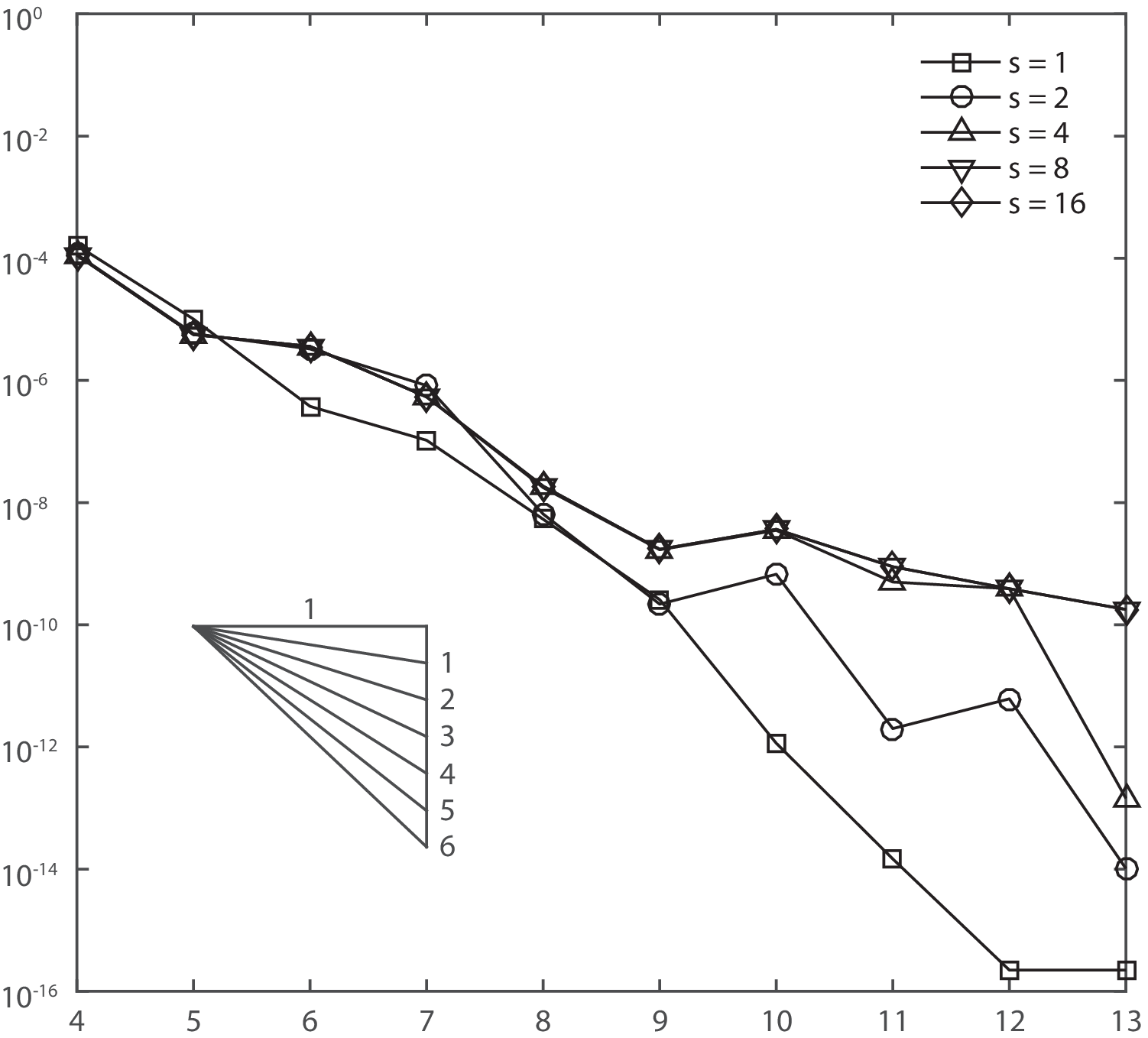}
\caption{The absolute integration error as functions of $m$ for $f_2$ with $w=0.5$ (top) and $w=0.1$ (bottom) by Sobol' sequence (left) and our constructed interlaced polynomial lattice rule (right).}
\label{fig:3}
\end{center}
\end{figure}
\begin{figure}
\begin{center}
\includegraphics[width=6cm]{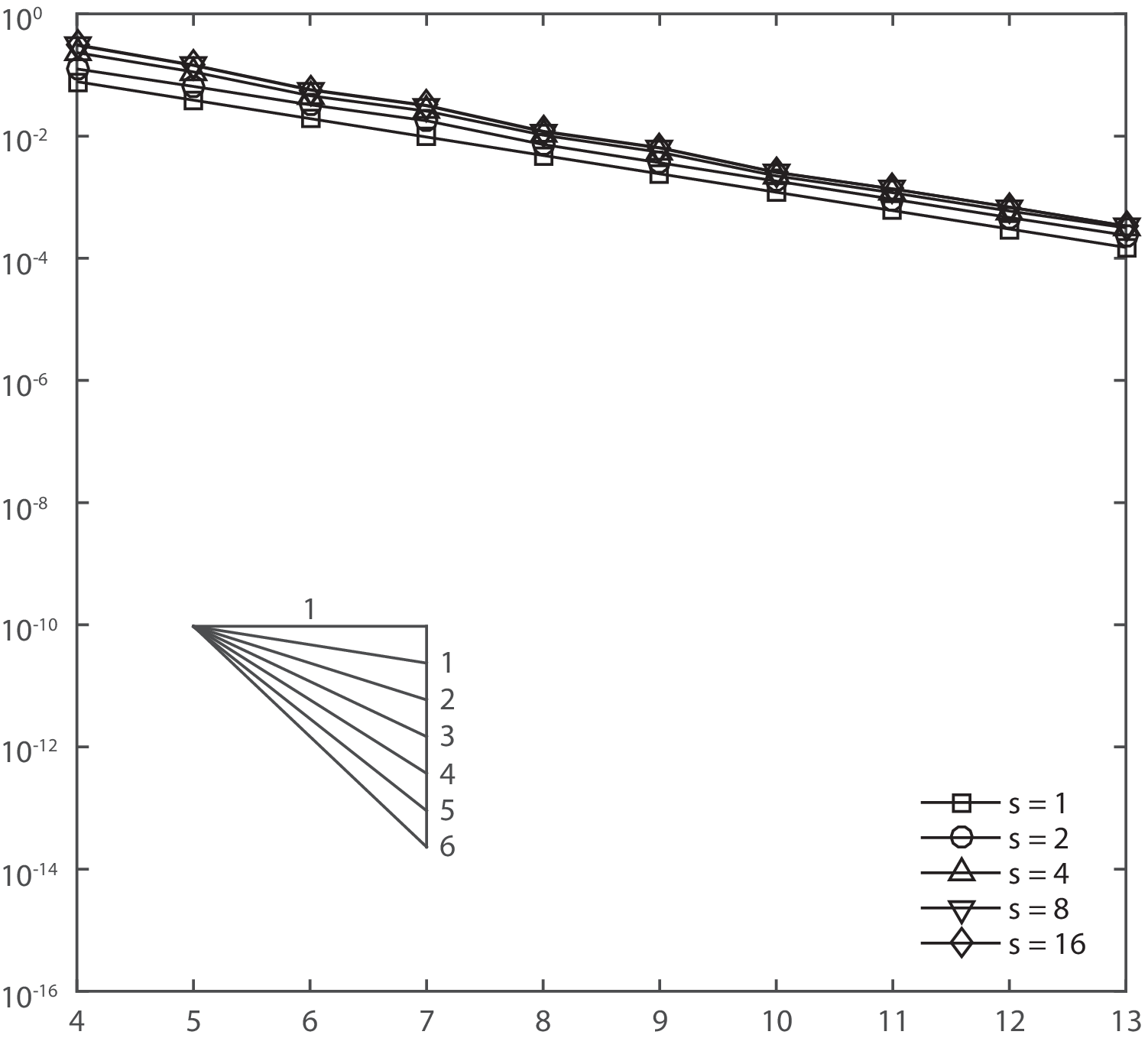}
\includegraphics[width=6cm]{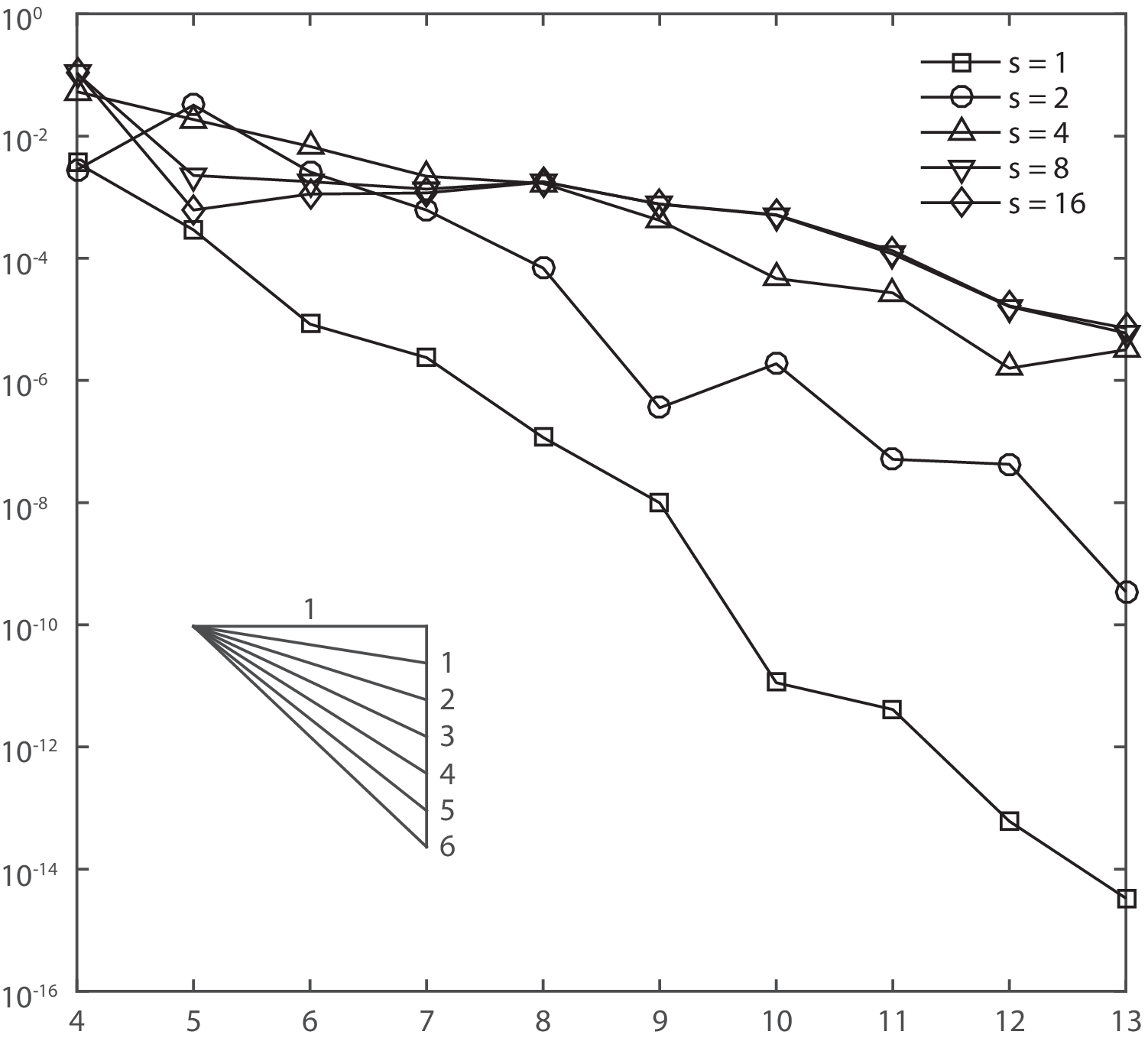}\\
\includegraphics[width=6cm]{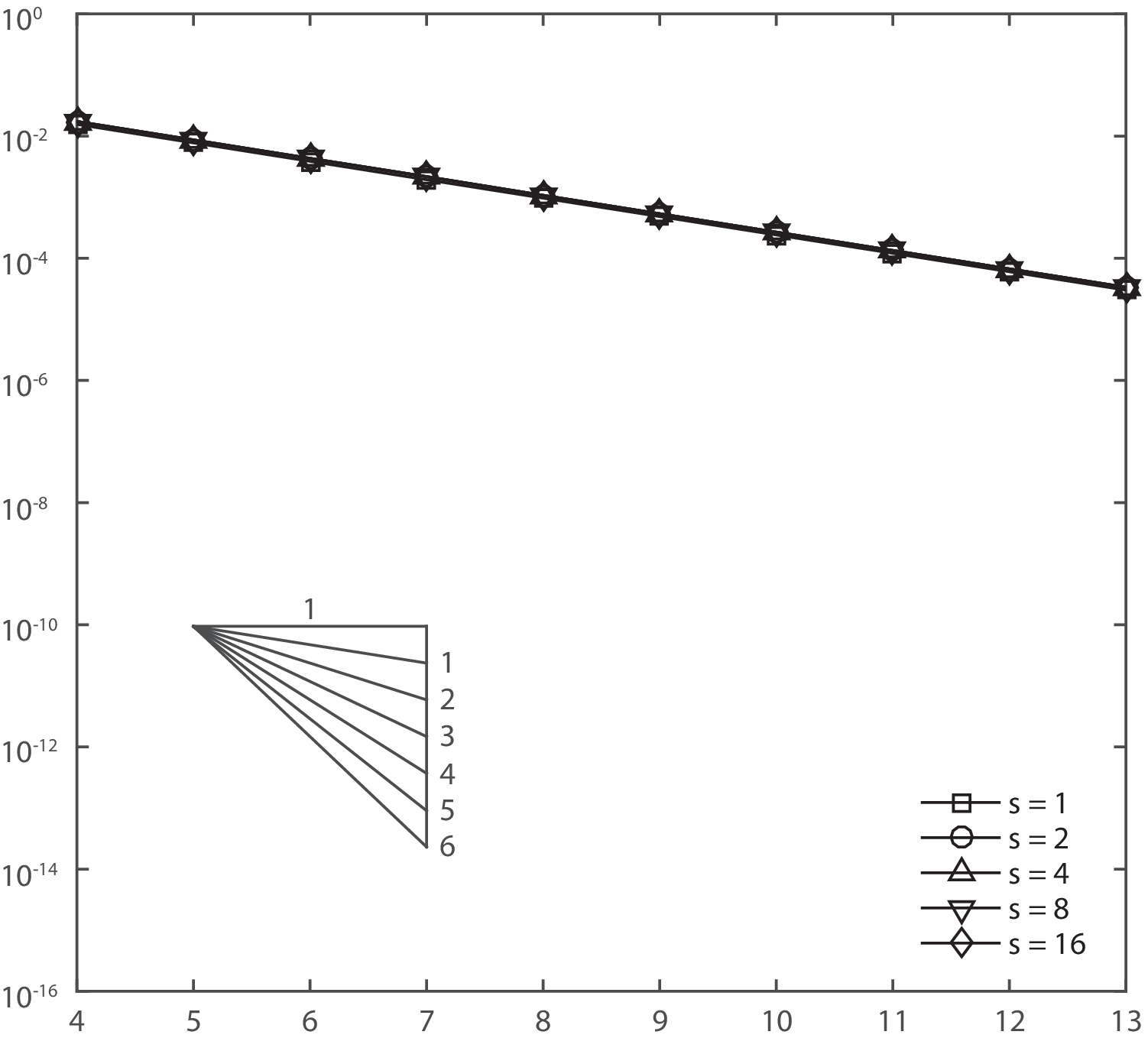}
\includegraphics[width=6cm]{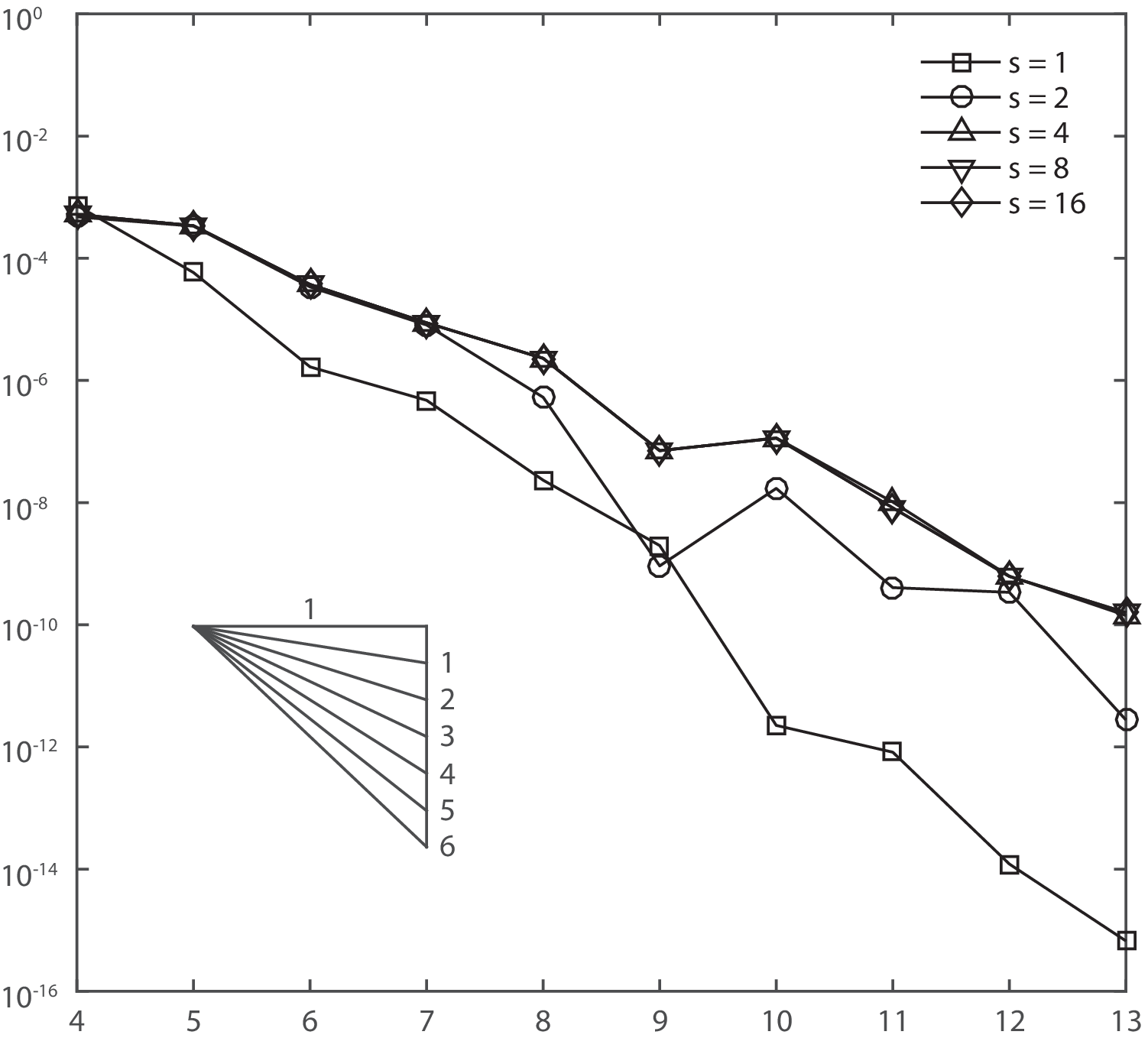}
\caption{The absolute integration error as functions of $m$ for $f_3$ with $w=0.5$ (top) and $w=0.1$ (bottom) by Sobol' sequence (left) and our constructed interlaced polynomial lattice rule (right).}
\label{fig:4}
\end{center}
\end{figure}

\end{document}